\newtheorem{thm}{Theorem}
\newtheorem*{main}{Theorem \ref{main}}
\newtheorem*{thm_surgery}{Theorem \ref{thm_surgery}}
\newtheorem*{thm_alt}{Theorem \ref{thm_alt}}
\newtheorem{lem}[thm]{Lemma}
\theoremstyle{definition}		
\newtheorem{definition}{Definition}
\newtheorem{example}{Example}
\newtheorem{remark}{Remark}
\newtheorem{step}{Step}
\newtheorem{stepp}{Step}
\title{On the fibration of augmented link complements}
\author{Darlan Gir\~ao}
\begin{document}
\maketitle

\begin{abstract}
We study the fibration of augmented link complements. Given the diagram of an augmented link we associate a spanning surface and a graph. We then show that this surface is a fiber for the link complement if and only if the associated graph is a tree.  We further show that fibration is preserved under Dehn filling on certain components of these links. This last result is then used to prove that  within a very large class of links, called locally alternating augmented links, every link is fibered.

\end{abstract}


\section{Introduction}
\label{intro}
Let $K$ be an oriented link in $S^3$.  By a  Seifert surface $S$  we mean an orientable spanning surface for $K$, i.e., $\partial S=K$ and the orientation of $S$ agrees with that of $K$. It is known that every oriented link has a Seifert surface (see for instance \cite{Ro}).  We say  the link $K$ is \textit{fibered} if $S^3-K$   has the structure of a surface bundle over the circle, i.e., if  there exists a Seifert surface $S$ such that $S^3-K\cong (S\times[0,1])/\phi$, where $\phi$ is a homeomorphism of $S$. In  this case we abuse terminology and say \textit{$S$ is a fiber for $K$}. 

 The study of the  fibration of link complements has been an active line of research in low dimensional topology. 
 In \cite{Ha} Harer showed how to construct all fibered knots and links. However,  deciding whether or not a link $K$  is fibered is in general a very hard problem.   Stallings \cite{St} proved that a link $K$ is fibered if and only if $\pi_1(S^3-K)$ contains a finitely generated normal subgroup whose quotient is $\mathbb{Z}$. Stallings' result is very general, but  hard to verify, even if we restrict to particular families of links.  In the  early 60's Murasugi \cite{Mu} proved that an alternating link is fibered if and only if its reduced Alexander polynomial is monic.   In \cite{Ga} Gabai proved that if a Seifert surface $S$ can be decomposed as the \textit{Murasugi sum} of surfaces $S_1,...,S_n$, then $S$ is a fiber if and only if each of the surfaces $S_i$ is a fiber (refer to theorem \ref{Gabai}). 
 Melvin and Morton \cite{MM} studied the fibration of genus 2 knots. Goodman--Tavares \cite{GT} showed that under simple conditions imposed on certain spanning surfaces, it is possible to decide whether or not these surfaces are fibers for pretzel links. Their method is very algebraic and relies on Stallings' work \cite{St}.  Very recently Futer--Kalfagianni--Purcell (\cite{FKP1}, theorem 5.11) introduced a new method for deciding whether or not a given spanning surface is fiber for a link $K$. From a diagram of the link they construct an associated  surface (called the  $A$-state surface) and a certain graph. They show that this surface is a fiber if and only if the corresponding graph is a tree.  Later, Futer \cite{Fu} extended this result to a larger class of links (\textit{homogeneous links}) using combinatorial arguments. 

In this paper we will be mainly concerned with the fibration of  three classes of links:  \textit{augmented links, locally alternating augmented links}, and links obtained from augmented links by Dehn filling on certain components. Given the diagram of an augmented link we associate a spanning surface and a graph. We then show that this surface is a fiber for the link complement if and only if the associated graph is a tree. We also show that when this is the case, then  Dehn filling on certain components of these links produces fibered manifolds. This last result is then used to show that every locally alternating augmented link  is fibered, and explicitly exhibit their fibers. A relevant remark here is that the surfaces and graphs we consider are very different from those in \cite{FKP1} and \cite{Fu}. It is also interesting to observe that we obtain the same type of results: a link complement is fibered if and only if an associated graph is a tree. 

The relevance of studying augmented links is that they have played a central role in several recent developments in 3-manifold topology. Lackenby and Agol--Thurston \cite{La} used them to estimate volumes of alternating link complements. Futer--Kalfagianni--Purcell \cite{FKP} used them  to obtain diagrammatic volume estimates of many knots and links.  Futer--Purcell \cite{FP} also used them to prove that if $K$ is a link  with a twist-reduced diagram with at least 4 twist regions and at least 6 crossings per twist region, then every non-trivial Dehn filling on $K$ is hyperbolic.  Their combinatorial argument further implies that every link with at least 2 twist regions and at least 6 crossings per twist region is hyperbolic and gives a lower bound for the genus of $K$. Cheesebro--DeBlois--Wilton \cite{CDW} proved that hyperbolic augmented links satisfy the virtual fibering conjecture.  

We next define the classes we will be working with and state the main results.

\section*{acknowledgements}
I am  very grateful to  Alan Reid for his guidance during my graduate program.  I am also thankful  to Cameron Gordon for helpful conversations and Jo\~ao Nogueira and Jessica Purcell for their comments on an early draft of this work.  Finally I would like to thank the referee for his careful reading of this paper and his many comments which helped improve it.


\section{Augmented links,  locally alternating augmented links and main results}
\label{section_thms}

The notion of \textit{augmented links} was first introduced by Adams \cite{Ad1}  and further explored by Futer--Kalfagianni--Purcell \cite{FKP}, Futer--Purcell \cite{FP} and Purcell \cite{Pu1}. We recall it here. For more details see the very nice survey paper on augmented links by Purcell \cite{Pu}. 

Let $K$ be a link in $S^3$ with diagram $D(K)$. Regard $D(K)$ as a $4$-valent graph in the plane. A \textit{bigon region} is a complementary region of the
graph having two vertices in its boundary. A string of bigon regions of the
complement of this graph arranged end to end is called a \textit{twist region}. A
vertex adjacent to no bigons will also be a twist region. Encircle each twist
region with a single unknotted component, called a \textit{crossing circle}, obtaining
a link $J$. $S^3-J$ is homeomorphic to the complement of the link $L$ obtained
from $J$ by removing all full twists from each twist region. The link $L$ is
called the augmented link associated to $D(K)$.  The original link complement can be obtained from the link $L$ by performing $1/n$-Dehn
filling on the crossing circles, for  appropriate choices of $n$.

\begin{figure}[h]
\begin{center}
\includegraphics[scale=.09]{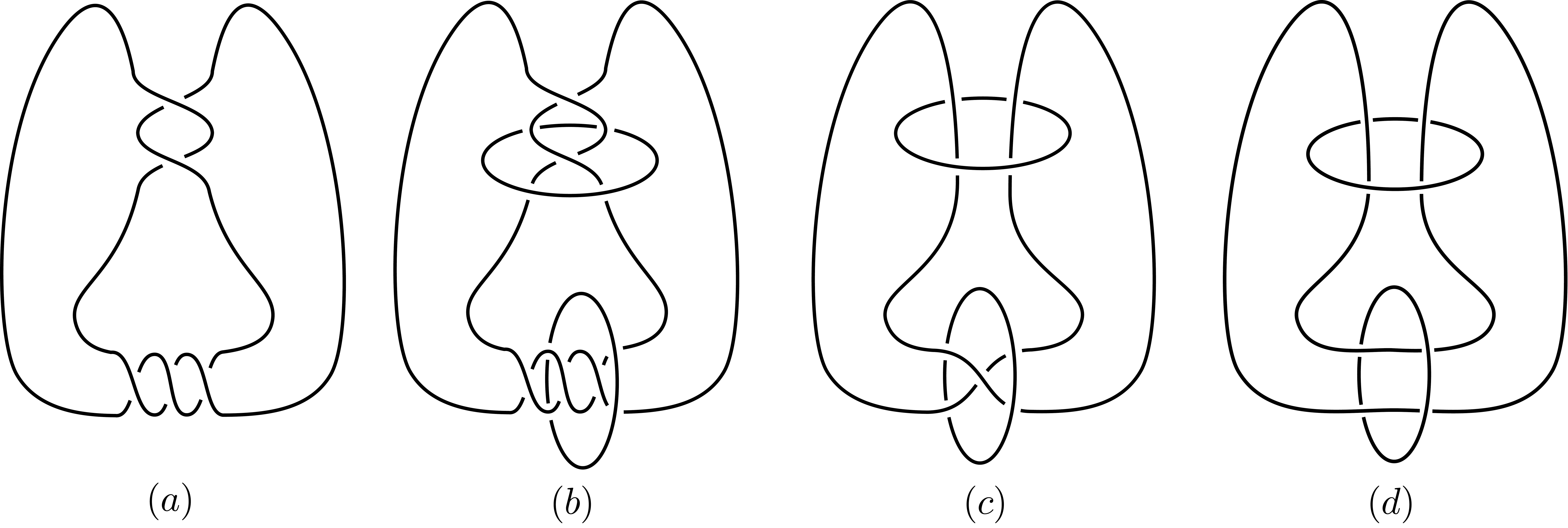}
\caption{(a) Initial link $K$; (b) link $J$ obtained by adding crossing circles; (c) augmented link $L$; (d) corresponding flat augmented link.}
\label{augmenting}
\end{center}
\end{figure}

When all the twist regions in the diagram $D(K)$ have an even number of crossings, then all non-crossing circle components of the augmented link $L$ will be embedded in the projection plane. We call these links  \textit{flat augmented links}.

 Given the diagram of a flat augmented link $L$ we construct a  Seifert surface $S_L$, called \textit{standard Seifert surface},  and a graph $G_B(L)$ (this will be done in section \ref{section_set_up}). We now state our main results.

\begin{thm}\label{main}
Let $L$ be a flat augmented link.  Then the standard Seifert surface $S_L$ is a fiber if and only if  the graph $G_B(L)$ is a  tree. 
\end{thm}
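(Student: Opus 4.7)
The natural strategy is to exhibit $S_L$ as an iterated Murasugi sum (plumbing) of elementary fibered surfaces whose combinatorial adjacency pattern is precisely the graph $G_B(L)$, and then invoke Gabai's theorem \ref{Gabai}. For a flat augmented link, $S_L$ is assembled from shaded pieces of the projection plane together with the obvious disks bounded by the crossing circles (which sit in planes perpendicular to the projection plane). These pieces meet along arcs, and each such meeting is the datum of an elementary plumbing; the vertices of $G_B(L)$ should correspond to the constituent pieces (or to certain regions of the projection plane), and its edges should record these arcs of plumbing.

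For the forward direction, I would argue that when $G_B(L)$ is a tree, the plumbing structure can be built up one edge at a time without ever identifying two already-attached pieces a second time. Each elementary piece is either a disk or a Hopf band, both of which are fibers for their respective boundary links, so Gabai's theorem yields inductively that $S_L$ is a fiber. The tree hypothesis is exactly what permits the sequence of Murasugi sums never to close up a loop that would invalidate this iterative argument.

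The converse---showing that $S_L$ is not a fiber when $G_B(L)$ contains a cycle---will be the main obstacle and requires more care. One promising route is to exploit the \emph{if and only if} form of Gabai's theorem: even in the presence of a cycle, $S_L$ should still admit a Murasugi sum decomposition dictated by a spanning tree of $G_B(L)$, together with an extra piece for each chord. A chord forces one of the summands to be neither a disk nor a Hopf band but instead a surface carrying an essential simple closed curve whose class obstructs fibration (for example, via a non-monic Alexander polynomial of the corresponding sublink, or by a direct construction of an essential annulus that cannot lift to a fiber bundle structure). A Stallings-style alternative would be to show that a cycle in $G_B(L)$ yields an element of the commutator subgroup of $\pi_1(S^3-L)$ that cannot lie in any finitely generated normal subgroup with $\mathbb{Z}$-quotient.

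The technical heart of the proof, then, is twofold: first, making precise the correspondence between the combinatorial data of $G_B(L)$ and a genuine Murasugi sum decomposition of $S_L$, and second, for the converse, producing from each independent cycle in $G_B(L)$ a concrete topological or algebraic obstruction to fibration. I expect the latter to be the real difficulty, since one must rule out the possibility that the cycles simply reflect redundancies in the decomposition and correspond to alternative, equally valid, fibered Murasugi descriptions of $S_L$.
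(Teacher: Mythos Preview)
Your outline has the right ingredients---Gabai's theorem and, as a fallback, Stallings---but the central structural claim is not justified and is in fact where the paper diverges from your plan.  You assert that $S_L$ decomposes as an iterated Murasugi sum of disks and Hopf bands whose adjacency graph is $G_B(L)$.  For the $A$-circles this is fine: each $A$-circle contributes a pair of Hopf bands that can be deplumbed, exactly as in Figure~\ref{decompose}.  But the edges of $G_B(L)$ are the $B$-circles, and at a $B$-circle the link is \emph{flat}: two parallel untwisted strands pass through the crossing circle.  There is no half-twist, so the local piece of $S_L$ is not a Hopf band, and you have given no argument that it is a fiber surface at all.  In other words, the very pieces indexed by the edges of $G_B(L)$ are the ones for which your ``each elementary piece is a disk or a Hopf band'' claim breaks down.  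This is a genuine gap, not a detail to be filled in later.

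The paper handles this by using Murasugi sum only to strip off the $A$-circles, reducing to a link $L'$ whose crossing circles are all $B$-circles and for which $G_B(L')=G_B(L)$.  It then abandons the plumbing picture entirely and attacks $S_{L'}$ with Stallings' criterion (Theorem~\ref{stallings}): one writes down explicit free generating sets for $\pi_1(S_{L'})$ and $\pi_1(S^3-S_{L'})$ indexed by the white regions, computes the push-off map $f_*$ on generators, and shows it is surjective (hence an isomorphism, by Hopficity) precisely when $G_B(L')$ is a tree.  The converse is cleaner than either of your suggestions: if $G_B(L')$ is disconnected or contains a cycle, one exhibits directly an element in the kernel of the induced map $\bar f_*$ on $H_1$, so $f_*$ cannot be an isomorphism.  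No Alexander polynomials, no essential annuli, and no analysis of ``extra summands'' is needed.
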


Performing $\pm 1$ Dehn filling on crossing circle components of links as above yield new ones which are again fibered. 

\begin{thm}\label{thm_surgery}
Let $L$ be an flat augmented link such that the graph $G_B(L)$ is a tree. Then  $\pm 1$ Dehn filling on crossing circle components yields a fibered link $K$.
\end{thm}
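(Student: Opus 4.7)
By Theorem \ref{main}, the hypothesis that $G_B(L)$ is a tree implies that the standard Seifert surface $S_L$ is a fiber for $L$. The plan is to construct an explicit Seifert surface $S_K$ for the filled link $K$ and to show that it is a fiber by appealing to Gabai's Murasugi sum theorem (Theorem \ref{Gabai}). The overall philosophy is that a single $\pm 1$ Dehn filling on a crossing circle should correspond, on the surface level, to plumbing on a single Hopf band of the matching sign.

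The key local observation is that $\pm 1$ Dehn filling on a crossing circle $C$ can be realized as a Hopf plumbing on $S_L$. Locally, $\pm 1$ filling on $C$ removes $C$ from the link and adds $\pm 1$ full twist to the two strands that pierce the crossing disk bounded by $C$. On the surface level, the portion of $S_L$ near $C$ (involving the crossing disk together with the adjacent bands of $S_L$) is replaced by a twisted band. I would verify that this local modification is precisely a Murasugi sum of $S_L$ with a $\pm$ Hopf band $H^{\pm}$, by exhibiting the plumbing region as a rectangle sitting on an embedded 2-sphere that separates the Hopf band summand from the rest of $S_L$, and checking that the boundary of the resulting surface at the plumbing site is exactly the twisted pair of strands. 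Performing this operation at every crossing circle independently then presents $S_K$ as an iterated Murasugi sum of $S_L$ and one Hopf band per crossing circle.

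Once the Murasugi sum decomposition is in place, Theorem \ref{Gabai} finishes the argument: since $S_L$ is a fiber (by Theorem \ref{main}) and each Hopf band is a fiber for the Hopf link (a classical fact), any iterated Murasugi sum of these surfaces is again a fiber, so $S_K$ is a fiber for $K$. The main obstacle I expect lies in Step 2: the explicit identification of the Murasugi sum decomposition. One must construct the plumbing 2-sphere near each crossing circle, confirm that the sign of the Hopf band summand matches the sign of the Dehn filling, and carefully track the boundary of the plumbed surface to check that it is indeed the link $K$. Because the operation is purely local and the crossing circles are disjoint, the associativity of Murasugi sum should let the argument proceed one crossing circle at a time, avoiding any global obstruction.
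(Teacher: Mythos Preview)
The proposal has a genuine gap: your key claim that $\pm 1$ filling on a crossing circle $C$ is realized by plumbing a Hopf band onto $S_L$ is false. The crossing circle $C$ is a component of $L$ and hence a \emph{boundary component} of $S_L$; it is not an unknot in $S^3\setminus S_L$ bounding a disk that meets $S_L$ in a single arc, which is the setting in which the classical ``$\pm 1$ surgery $=$ Hopf plumbing'' identification holds. A Betti--number count already rules out your picture for $B$-circles: $\pm 1$ filling on a $B$-circle deletes both the type-$B$ white region and the crossing-circle boundary component, so $\operatorname{rank}\pi_1(S_K)=\operatorname{rank}\pi_1(S_L)-1$, whereas a Hopf plumbing would \emph{increase} the rank by one. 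Thus $S_K$ cannot be expressed as $S_L * H^{\pm}$.

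The paper accordingly treats the two kinds of crossing circles differently. For $A$-circles it observes that both $S_L$ and the filled surface are Murasugi sums of the \emph{same} underlying surface $S_{L'}$ with Hopf bands (two bands per unfilled $A$-circle versus one band per $\pm 1$-filled $A$-circle), so Gabai's theorem applies --- but note that even here the filled surface is obtained by \emph{removing} a Hopf summand relative to $S_L$, not by adding one. For $B$-circles no plumbing shortcut is taken at all: the paper runs a direct Stallings argument on the filled link $K'$, writing down explicit generators for $\pi_1(S_{K'})$ and $\pi_1(S^3-S_{K'})$, organizing the computation of the push-off map $f_*$ by the levels of the tree $G_B(K')=G_B(L')$, and exhibiting a sequence of Nielsen transformations showing that $f_*$ is an isomorphism. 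This is precisely the work your plumbing claim was meant to bypass.
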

Given a flat augmented link $L$, one can construct a corresponding \textit{locally alternating augmented link}  $L_a$ as follows: in each crossing circle change two of the crossings  so that the crossings in the crossing circles are  alternating. This is described in Figure \ref{L_a}. Note that the resulting link need not to be alternating. 

\begin{figure}[h]
\begin{center}
\includegraphics[scale=.12]{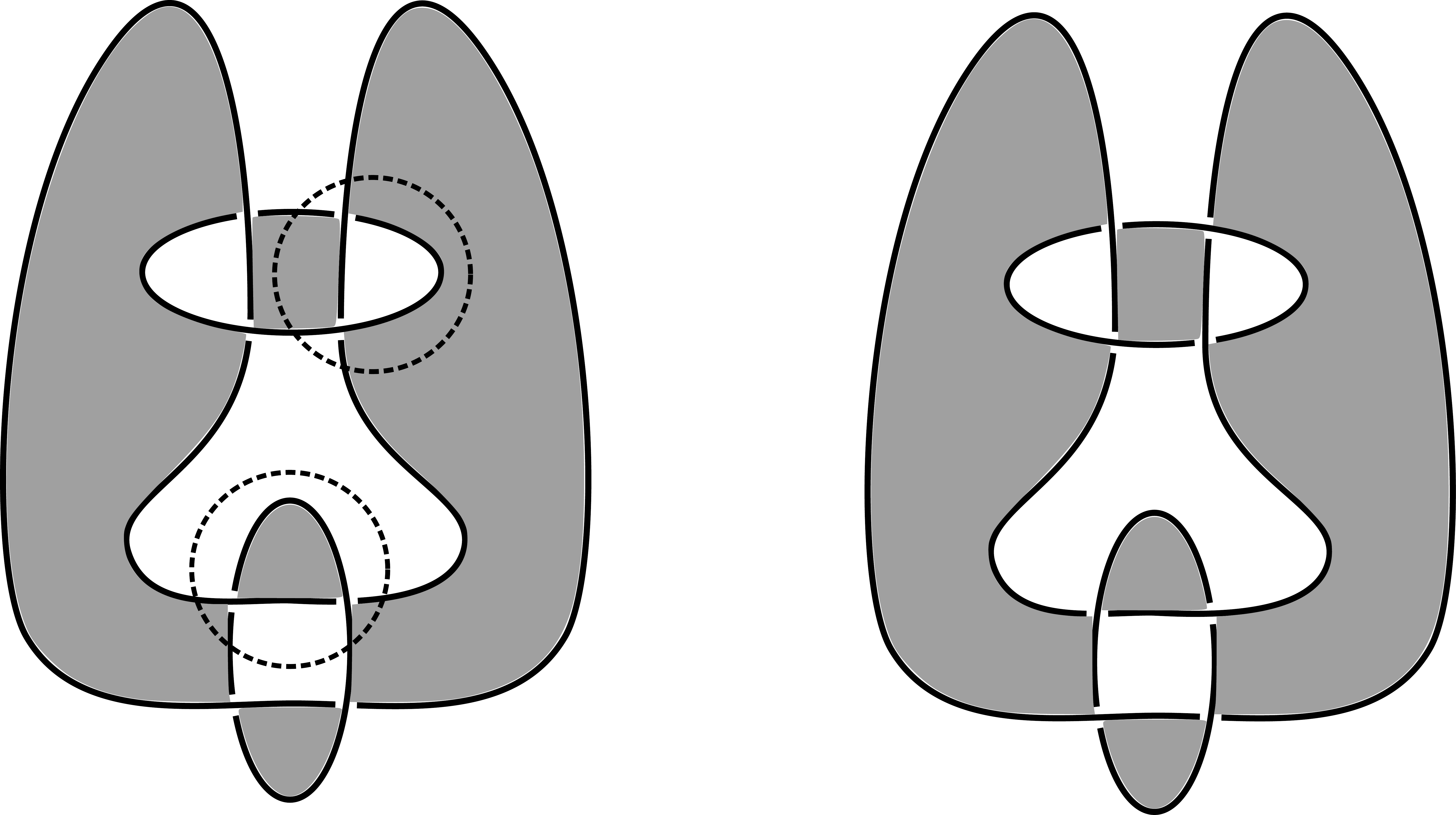}
\caption{Left: Pairs of crossings yielding  locally alternating link; Right: Seifert surface for resulting link.}
\label{L_a}
\end{center}
\end{figure}

We later show that every locally alternating augmented link $L_a$ can be obtained from $\pm 1$ filling  on the crossing circles of a flat augmented link $\tilde{L}$ such that $G_B(\tilde{L})$ is a tree. This implies

\begin{thm}\label{thm_alt}
Let $L_a$ be a locally alternating augmented link obtained from a flat augmented link $L$ with a connected diagram.  Then $L_a$ fibers. 
\end{thm}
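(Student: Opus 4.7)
The plan is to deduce Theorem \ref{thm_alt} from Theorem \ref{thm_surgery}. The task splits into two subgoals: first, construct a flat augmented link $\tilde{L}$ from which $L_a$ is recovered by $\pm 1$ Dehn filling on a family of crossing circle components; second, verify that the associated graph $G_B(\tilde{L})$ is a tree. With both in hand, Theorem \ref{thm_surgery} immediately implies that $L_a$ is fibered.

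For the first subgoal, I proceed as follows. By construction $L_a$ is obtained from $L$ by installing, at each crossing circle of $L$, a pair of crossings arranged alternately. Such a pair is exactly one full twist and hence a twist region in the diagram of $L_a$. I encircle each of these new twist regions with a fresh unknotted crossing circle and then remove the inserted full twist, obtaining a link $\tilde{L}$ whose diagram contains twice as many crossing circles as that of $L$ and no other crossings. Thus $\tilde{L}$ is a flat augmented link in the sense of Section~\ref{section_thms}, and performing $\pm 1$ Dehn filling (with signs chosen to match the alternating convention of $L_a$) on each of the newly added crossing circles reinserts the missing full twist and recovers $L_a$.

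The heart of the argument is the second subgoal. Because $D(L)$ is assumed connected and the construction of $\tilde{L}$ is local, the diagram of $\tilde{L}$ is again connected, which should in turn force $G_B(\tilde{L})$ to be connected. It therefore remains to prove that $G_B(\tilde{L})$ has no cycles. I expect that encircling two crossings of a flat augmented link with a new crossing circle has a very constrained effect on the shaded regions used to define $G_B$: each such addition should contribute one new vertex together with one new edge of pendant type, preserving the Euler characteristic $V - E$. Starting from the trivial base case of a single unknotted crossing circle, where $V - E = 1$, and building up $\tilde{L}$ by successively adding crossing circles in the prescribed way, one should conclude $V - E = 1$ for $\tilde{L}$, which gives the tree property.

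The main obstacle is precisely this combinatorial verification. One cannot simply appeal to Theorem \ref{main} for $L$ itself, since $G_B(L)$ need not be a tree; instead, the specific doubling of crossing circles used to form $\tilde{L}$ must force tree-ness regardless of the starting flat augmented link. Carrying this out requires a careful case analysis of how each newly introduced crossing circle interacts with the standard Seifert surface and the black regions that define $G_B$, and the connectedness hypothesis on $D(L)$ is what rules out disconnected components that would otherwise produce a forest rather than a tree.
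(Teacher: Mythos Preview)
Your high-level strategy---produce a flat augmented link $\tilde{L}$ with $G_B(\tilde{L})$ a tree from which $L_a$ is recovered by $\pm 1$ fillings, then invoke Theorem~\ref{thm_surgery}---is exactly the paper's strategy. The gap is in both halves of the execution.

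First, your description of $L_a$ is off: the locally alternating link is obtained from $L$ by \emph{changing} two of the four existing crossings at each crossing circle so that the four crossings alternate, not by inserting a new full twist between the two encircled strands. So there is no obvious ``new twist region'' to encircle, and your construction of $\tilde{L}$ as stated is not well defined. One can salvage the first subgoal (the paper does something similar in its final step, replacing each alternating crossing circle by a pair of flat ones, see Figure~\ref{surgery2}), but not in the form you describe.

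Second, and more seriously, your argument that $G_B(\tilde{L})$ is a tree does not go through. Connectedness of the diagram does \emph{not} force connectedness of $G_B$: the right-hand example in Figure~\ref{Newlink} is a flat augmented link with connected diagram and disconnected $G_B$. Likewise, the Euler-characteristic induction you sketch has no valid base case (a flat augmented link cannot be built from a single bare crossing circle) and no mechanism that distinguishes your ``doubled'' $\tilde{L}$ from an arbitrary flat augmented link---for which $G_B$ is certainly not always a tree. Merely doubling the crossing circles does not repair cycles or disconnections already present in $G_B(L)$.

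The paper handles this by a genuinely different idea: rather than hoping a uniform construction yields a tree, it \emph{edits} the graph directly. Starting from $G_B(L_a)$, it breaks each cycle by replacing a $B$-circle on that cycle by an $A$-circle (realised as an ``unfilling''), and joins disconnected components by replacing a suitable $A$-circle by a $B$-circle (again an unfilling); see Figures~\ref{remove_edge} and~\ref{join_vertices}. Only after $G_B$ has been forced to be a tree are the remaining alternating crossing circles traded for pairs of flat ones. This targeted surgery on $G_B$ is the missing ingredient in your proposal.
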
 

\begin{remark}
 We remark that our methods and  the surfaces and graphs we construct are very different from those in \cite{FKP1} or even \cite{Fu}. However, it is very interesting that we are obtaining the same type of results: a manifold fibers given that a certain associated graph is a tree. We also note that very often fibration of the links considered here cannot be detected from their construction but is detected by ours (examples for the converse can also be exhibited).  
\end{remark}

The remainder of the  paper is organized as follows: in section \ref{Murasugi sum} we  we recall the notion of \textit{Murasugi sum}. This will be used in the decomposition of the \textit{standard surface}. In section \ref{section_set_up} we set up the terminology of standard surface and of the graph $G_B(L)$  needed for the remainder of the paper.  In section \ref{section_proof} we prove theorem \ref{main}. In section \ref{section_surgery} we prove theorem \ref{thm_surgery} . Finally, in section \ref{section_alt} we use theorem \ref{thm_surgery}  to prove theorem \ref{thm_alt}.


\section{Murasugi Sum}\label{Murasugi sum}

In this section we recall the notion of Murasugi sum (\cite{Ga1} and \cite{Mu}).

\begin{definition}
We say that the oriented surface $T$ in $S^3$ is with boundary $L$ is the Murasugi sum of the two oriented surfaces $T_1$ and $T_2$ with boundaries $L_1$ and $L_2$ if there exists a $2$-sphere $S$ in $S^3$  bounding  balls $B_1$ and $B_2$ with $T_i\subset B_i$ for $i=1,2$, such that $T=T_1\cup T_2$ and $T_1\cap T_2=D$ where $D$ is a $2n$-sided disk contained in $S$ (see Figure \ref{Murasugi_sum}).  
\end{definition}

\begin{figure}[h]
\begin{center}
\includegraphics[scale=.18]{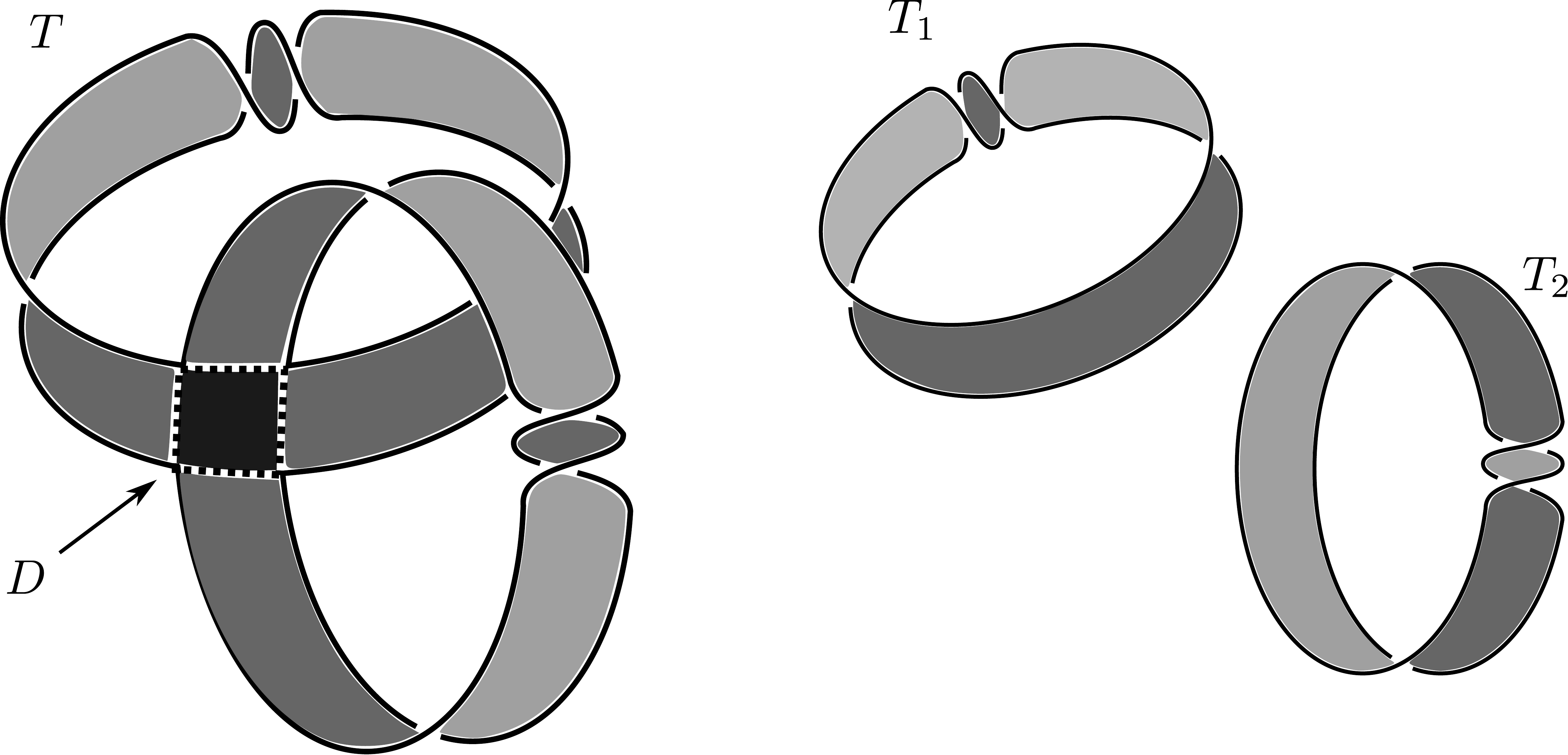}
\caption{Murasugi sum of two Hopf bands}
\label{Murasugi_sum}
\end{center}
\end{figure}

The result concerning Murasugi sum we need is the following, due to Gabai \cite{Ga}.

\begin{thm}[Gabai]\label{Gabai}
Let $T\subset S^3$, with $\partial T=L$, be a Murasugi sum of oriented surfaces $T_i\subset S^3$, with $\partial T_i= L_i$, for $i=1,2$.  Then $S^3-L$ is fibered with fiber $T$ if and only if $S^3-L_i$ is fibered with fiber $T_i$ for $i=1,2$. 
\end{thm}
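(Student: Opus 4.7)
The plan is to reformulate the fibration condition in the language of sutured manifolds, which is the natural setting for Murasugi sums. Setting $M=S^{3}\setminus\nu(L)$, cutting $M$ open along $T$ yields a 3-manifold $M_{T}$ carrying a natural sutured structure: the sutures lie on $\partial\nu(L)$ along $\partial T$, while $R_{\pm}$ are the two copies of $T$. Standard arguments show that $T$ is a fiber for $S^{3}\setminus L$ if and only if $M_{T}$ is the product sutured manifold $T\times[0,1]$. The theorem then becomes: $M_{T}$ is a product sutured manifold if and only if each $M_{T_{i}}$ is.

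For the $(\Leftarrow)$ direction I would argue constructively. The Murasugi-summing sphere $S$ bounds balls $B_{1},B_{2}$ with $T_{i}\subset B_{i}$ and $T\cap S=D$ a $2n$-gon. The disk $S'=S\setminus\nu(D)$ sits properly in $M_{T}$ and, after a small isotopy, cuts $M_{T}$ into two pieces canonically identified with $M_{T_{1}}$ and $M_{T_{2}}$. If each $M_{T_{i}}\cong T_{i}\times I$, then gluing the two products along the $2n$-gonal sub-product $D\times I$---which is precisely how $T=T_{1}\cup_{D}T_{2}$ is assembled, multiplied by $I$---produces a manifold homeomorphic to $T\times I$. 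One verifies directly that the horizontal boundaries $R_{\pm}$ and the sutures match under this gluing, yielding $M_{T}\cong T\times I$.

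The harder direction $(\Rightarrow)$ is the heart of the theorem. Here I would argue that $S'$ realizes a sutured-manifold decomposition of $M_{T}$, and that such a decomposition of a product sutured manifold necessarily yields products. The boundary $\partial S'$ meets the sutures transversely in exactly $2n$ points, alternating between arcs in $R_{+}$ and $R_{-}$ in the pattern dictated by $\partial D$, so $S'$ is a \emph{product decomposing disk} in Gabai's sense. I would then invoke the lemma that inside $T\times I$ any such decomposing disk is properly isotopic to a vertical disk $\alpha\times I$ for some properly embedded arc $\alpha\subset T$. Cutting the product along a vertical disk trivially gives two product sutured manifolds, and these two products are exactly $M_{T_{1}}$ and $M_{T_{2}}$.

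The main obstacle is the verticality lemma just invoked: one must rule out ``twisted'' or non-vertical embeddings of $S'$ inside $T\times I$. The argument rests on irreducibility of $T\times I$, incompressibility of each horizontal slice $T\times\{t\}$, and a standard innermost-disk and outermost-arc exchange that straightens $S'$ until it intersects each slice in a single arc, forcing $S'$ to be vertical. Once this lemma is established the decomposition of $M_{T}$ along $S'$ is manifestly a product decomposition, and the theorem follows.
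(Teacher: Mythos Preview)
The paper does not prove this theorem; it is quoted from Gabai \cite{Ga,Ga1} and used as a black box, so there is no proof in the paper to compare your proposal against.

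On the merits of the proposal itself: the sutured--manifold reformulation is exactly the right setting and is essentially Gabai's own approach, and your $(\Leftarrow)$ argument is fine. The $(\Rightarrow)$ direction, however, has a genuine gap. Your ``verticality lemma''---that the decomposing disk $S'$ in $T\times I$ is properly isotopic to $\alpha\times I$ for a single properly embedded arc $\alpha\subset T$---is false whenever the Murasugi polygon $D$ is a $2n$-gon with $n>1$. A vertical rectangle $\alpha\times I$ meets the suture $\partial T\times\{\tfrac12\}$ in exactly $2$ points, whereas $\partial S'$ meets it in $2n$ points, and this intersection number is preserved under proper isotopy. So for $n=1$ (ordinary plumbing) your argument goes through, but for general Murasugi sums it cannot.

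The fix, which is what Gabai actually does, is to isotope $S'$ so that it meets a middle level $T\times\{\tfrac12\}$ in a collection of $n$ essential arcs; these arcs cut $S'$ into $n$ genuine product disks, each meeting the sutures in exactly two points. One then decomposes along these product disks one at a time, using at each stage the (correct) lemma that a product sutured manifold cut along a product disk is again a product. After all $n$ cuts the resulting pieces are identified with $M_{T_1}$ and $M_{T_2}$. Your outline is therefore salvageable, but the single-step verticality lemma must be replaced by this iterated product--disk decomposition.
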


\begin{remark}
We abuse notation by saying that $L$ is the Murasugi sum of $L_1$ and $L_2$.
\end{remark}


\section{Set up}\label{section_set_up}

The augmented links we consider are flat, i.e., there are no twists adjacent to crossing circles. 
Our goal is to find a condition under which such  links fiber. Recall that Stallings \cite{St} provided a method for checking whether or not a given Seifert surface is a fiber for the complement of an oriented link.

\begin{thm}[Stallings]\label{stallings}
Let $T\subset S^3$ be a compact, connected, oriented surface with nonempty boundary $\partial T$. Let $T\times[-1,1]$ be a regular neighborhood of $T$ and let $T^+=T\times\{1\}\subset S^3-T$. Let $f=\varphi|_{T}$, where $\varphi:T\times[-1,1]\longrightarrow T^+$ is the projection map. Then $T$ is a fiber for the link $\partial T$ if and only if  the induced map $f_*:\pi_1(T)\longrightarrow\pi_1(S^3-T)$ is an isomorphism.   
\end{thm}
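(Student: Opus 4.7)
The strategy is to prove the biconditional by addressing the two directions separately. The forward direction follows from the product structure of a fibration near a fiber; the reverse direction, which is the substantive part, reduces to Stallings' classical theorem on fibering 3-manifolds over the circle.

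For the forward direction, suppose $T$ is a fiber, so $S^3\setminus\partial T \cong (T\times[0,1])/\phi$ with $T$ identified with one distinguished fiber. Then $S^3\setminus T \cong T\times(0,1)$, which deformation retracts onto the push-off $T^+$, so the inclusion $T^+\hookrightarrow S^3\setminus T$ is a homotopy equivalence and $f_*$ is an isomorphism.

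For the reverse direction, let $M = S^3\setminus\nu(\partial T)$ and let $N$ be the compact manifold obtained by cutting $M$ along $T$; then $\partial N$ contains two copies $T^+$ and $T^-$ of $T$, and $S^3\setminus T$ is homotopy equivalent to the interior of $N$, so the hypothesis becomes that $\iota_+\colon\pi_1(T^+)\to\pi_1(N)$ is an isomorphism. The plan has three steps. \emph{Step 1}: show that the other inclusion $\iota_-\colon\pi_1(T^-)\to\pi_1(N)$ is also an isomorphism, using that $T^+$ and $T^-$ are isotopic in $M$ through the regular neighborhood together with a homological argument specific to link exteriors in $S^3$ (where $H_1(M)$ is generated by meridians of $\partial T$, giving $T^+$ and $T^-$ a symmetric role in $\pi_1(M)$). \emph{Step 2}: form the infinite cyclic cover $\widetilde{M}\to M$ dual to $T$, which is obtained by gluing infinitely many copies of $N$ end-to-end along $T^\pm$; iterating van Kampen with both $\iota_\pm$ isomorphisms collapses the amalgamations and identifies $\pi_1(\widetilde{M})\cong\pi_1(T)$, a finitely generated group. \emph{Step 3}: apply Stallings' general fibration theorem to $M$: the finitely generated normal subgroup $\pi_1(\widetilde{M})\trianglelefteq\pi_1(M)$ with quotient $\mathbb{Z}$ produces a fibration $M\to S^1$, and one checks via the dual cohomology class in $H^1(M)$ that the fiber is isotopic to $T$.

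The main obstacle is Stallings' general theorem itself, which upgrades the purely algebraic datum of a finitely generated normal subgroup with cyclic quotient to a genuine geometric fibration; its proof relies on hierarchy theory for Haken 3-manifolds (or equivalently on group-theoretic end arguments) and cannot be shortcut. A secondary subtlety is the symmetry between $T^+$ and $T^-$ in Step 1: the hypothesis only gives the isomorphism on one side, and transferring it to the other side genuinely uses the embedding of $T$ in $S^3$ rather than any abstract property of the cobordism $N$.
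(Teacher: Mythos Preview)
The paper does not prove this statement: it is quoted as Stallings' theorem with a reference to \cite{St} and then used as a tool. There is no argument in the paper to compare your attempt against.

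Judged on its own merits, your forward direction is correct. In the reverse direction the architecture is right, but the step you call a ``secondary subtlety'' is in fact the crux, and you have not proved it. Your Steps~2 and~3 as written need \emph{both} inclusions $\iota_\pm$ to be isomorphisms: the van Kampen amalgamations in the infinite cyclic cover collapse cleanly only when both edge maps are isomorphisms, and identifying the fiber produced by Stallings' general theorem with $T$ (rather than merely with some surface in the same homology class) requires $T$ to be incompressible in $M$, which again needs injectivity on both sides. The phrase ``a homological argument specific to link exteriors in $S^3$'' is not a proof. Even if one extracts from the Seifert pairing that $(\iota_-)_*$ is an isomorphism on $H_1$ (the two push-offs are governed by a Seifert matrix and its transpose, which have the same determinant), an $H_1$-isomorphism between free groups of equal rank does not force a $\pi_1$-isomorphism. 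A genuine $3$-manifold argument is required here---for instance, showing that $N$ is aspherical (so that $T^+\hookrightarrow N$ is a homotopy equivalence) and then using Poincar\'e--Lefschetz duality for the pair $(N,T^+)$ to conclude that $T^-\hookrightarrow N$ is one as well. Until Step~1 is actually carried out, the proof is incomplete.
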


Consider the diagram of $L$ as a planar graph. This graph divides the plane into regions which are checkerboard colored. The unbounded region is colored white and the other regions are colored accordingly. Black regions correspond to a Seifert surface, called \textit{standard  surface}, which induces an orientation on the link $L$. This surface will be denoted $S_{L}$. There are three types of white regions: 

\begin{itemize}
\item[] \textbf{Type $A$ regions}, bounded by  crossing circles that bound two white regions; 
\item[] \textbf{Type $B$ regions}, bounded by  crossing circles that bound a single white region;
\item[] \textbf{Type $C$ regions}, not bounded by crossing circles.
\end{itemize}

\begin{figure}[h]
\begin{center}
\includegraphics[scale=.12]{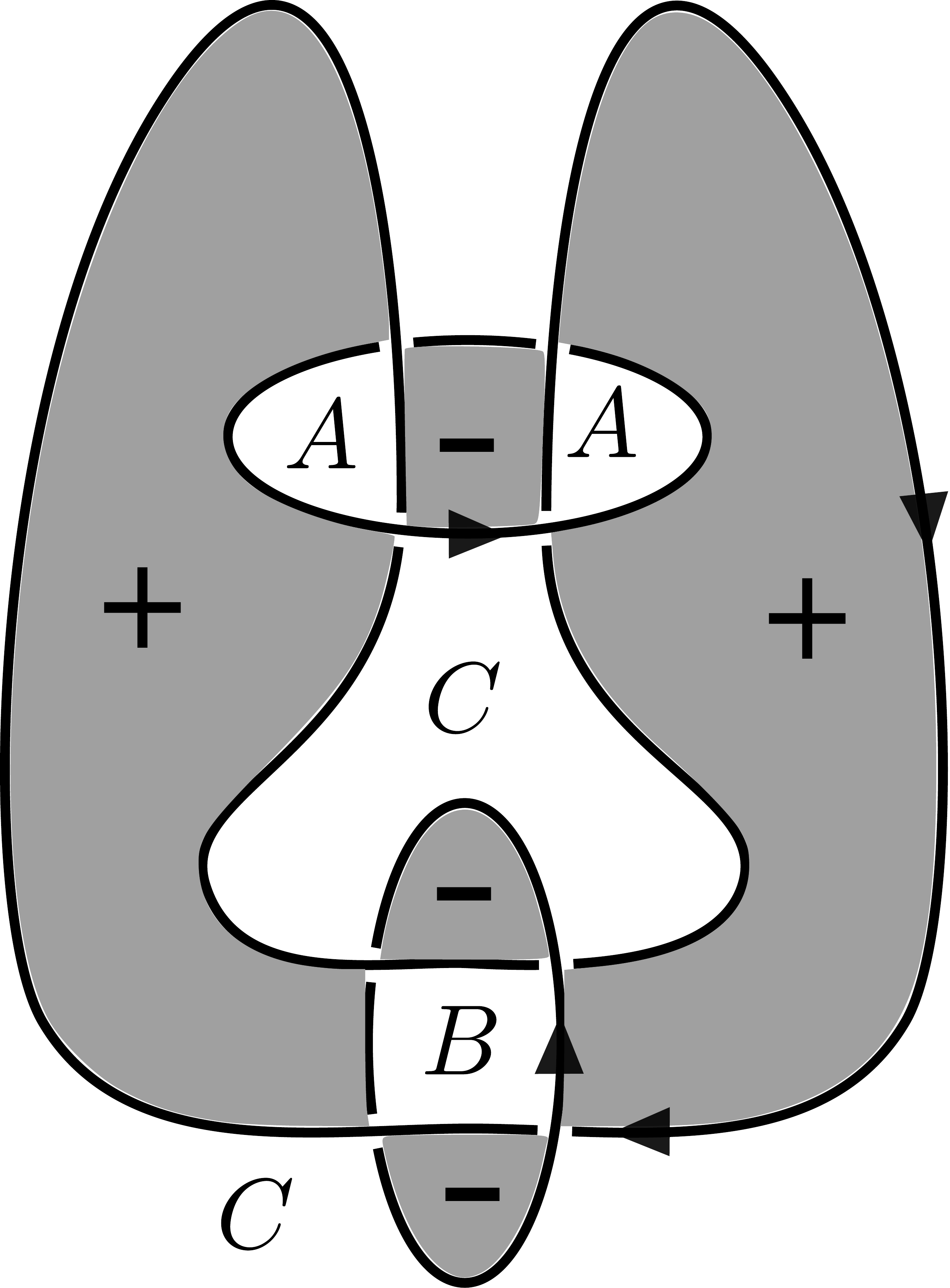}
\caption{Standard Seifert surface for augmented link and corresponding white regions determined by the diagram}
\label{regions}
\end{center}
\end{figure}

Note that type $A$ regions come in pairs. We will the denote the above regions by $A_{11}, A_{12}, A_{21}, A_{22},..., A_{p1}, A_{p2}, B_1,..., B_q, C_1,..., C_r$ respectively. We  denote the unbounded white region by $C_0$.

A crossing circle bounding a type $B$ region $B_i$ will be called a \textit{$B$-circle}, also denoted by $B_i$. Note that given the diagram for $L$, as described above, there will be two type $C$ regions  adjacent to each $B$-circle.  A crossing circle bounding a pair of type $A$ regions $A_{i1},A_{i2}$ will be called a \textit{$A$-circle} and denoted by $A_i$. Note that there will be two type $C$ regions adjacent to each $A$-circle. 
Let $G_B(L)$ be the graph obtained from the diagram of $L$ as follows (see figure \ref{Newlink}). 

\begin{itemize}
\item[] \textbf{Vertices} are type $C$ regions; 
\item[] An \textbf{edge} joins $C_i$ and $C_j$ if there is a $B$-circle adjacent to them.
\end{itemize}

\begin{figure}[h]
\begin{center}
\includegraphics[scale=.10]{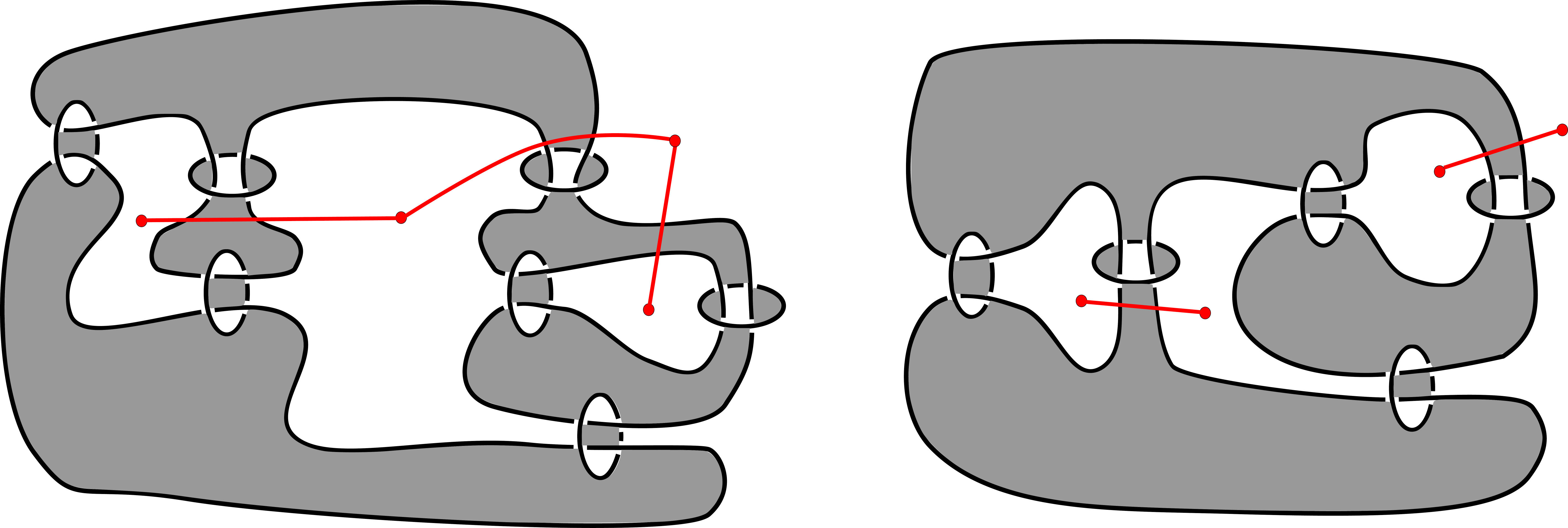}
\caption{Left: Graph $G_B(L)$ associated to the standard surface is a tree; Right: $G_B(L)$ is disconnected.}
\label{Newlink}
\end{center}
\end{figure}
 
 The standard surface $S_{L}$ can be obtained as the Murasugi sum of a surface $S_{L'}$ and a collection of Hopf bands. We can remove  each $A$-circle in the diagram by deplumbing (i.e., undoing Murasugi sum) a pair of Hopf bands. This is described in figure \ref{decompose}. Applying this procedure to every $A$-circle we obtain a new augmented link $L'$, with standard surface $S_{L'}$.  This decomposition will be  used in the proof of theorem \ref{main}. 
 
\begin{figure}[h]
\begin{center}
\includegraphics[scale=.080]{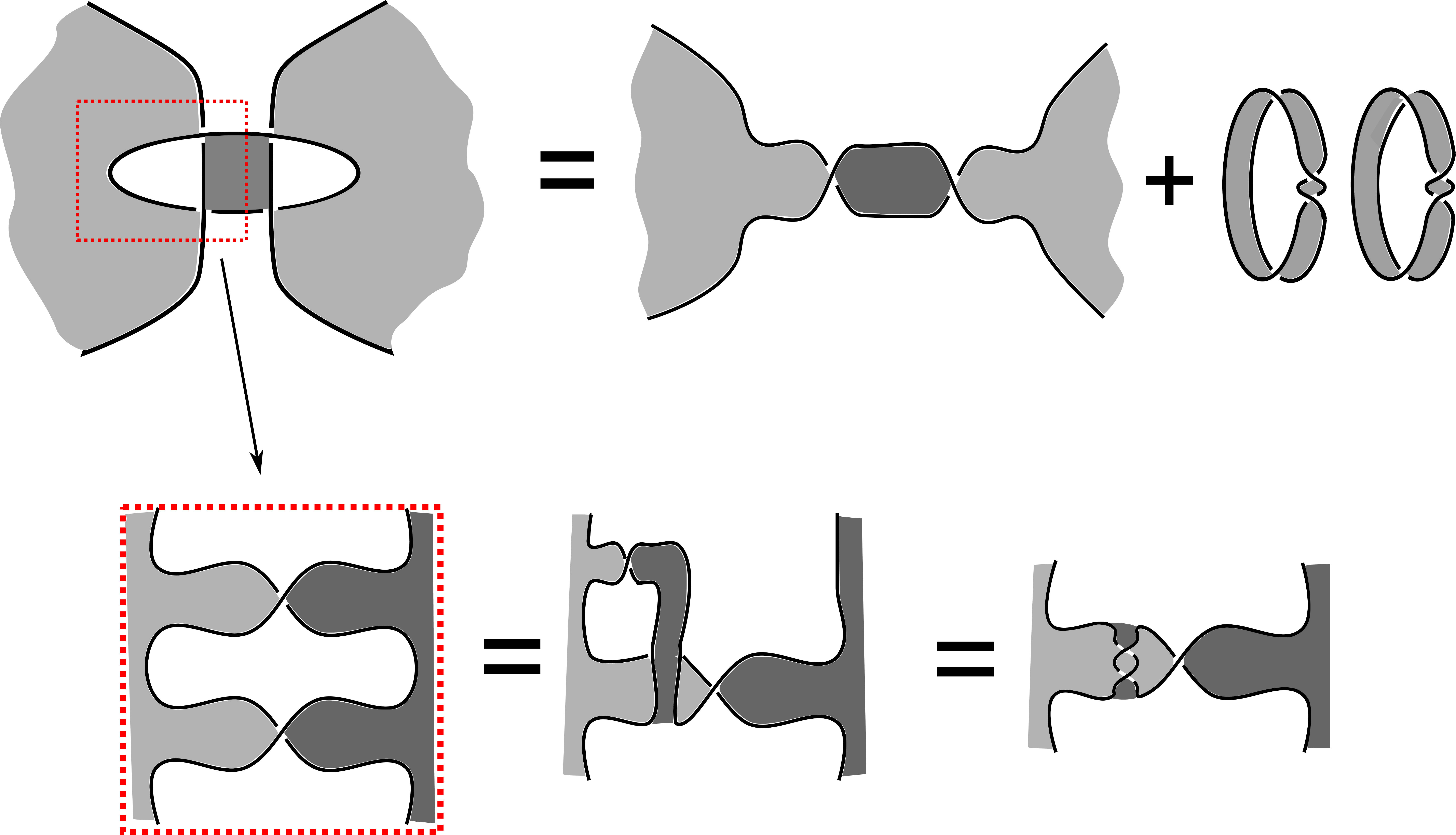}
\caption{Decomposing a pair of Hopf bands for each $A$-circle.}
\label{decompose}
\end{center}
\end{figure}

\section{Proof of theorem \ref{main}}\label{section_proof}

\begin{main}
Let $L$ be a flat augmented link.  Then the standard Seifert surface $S_L$ is a fiber if and only if  the graph $G_B(L)$ is a  tree. 
\end{main}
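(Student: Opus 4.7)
The proof will proceed in two parts, combining Gabai's Murasugi sum theorem (Theorem \ref{Gabai}) with Stallings' fibration criterion (Theorem \ref{stallings}).

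\textbf{Part 1 (Reducing $A$-circles).} As indicated at the end of Section \ref{section_set_up} and illustrated in Figure \ref{decompose}, the standard surface $S_L$ is an iterated Murasugi sum of $S_{L'}$ with a pair of Hopf bands for each $A$-circle, where $L'$ is the flat augmented link obtained by deleting all $A$-circles. Since Hopf bands are the fiber surfaces of the Hopf link complement, Theorem \ref{Gabai} gives that $S_L$ is a fiber if and only if $S_{L'}$ is. Because $G_B$ sees only type $C$ regions and $B$-circles, the $A$-circle removal leaves $G_B$ untouched, i.e.\ $G_B(L)=G_B(L')$. Hence we may assume from now on that $L$ has no $A$-circles.

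\textbf{Part 2 (The equivalence when $L$ has only $B$-circles).} I would prove the two implications by different means. For the forward direction (tree implies fiber), I would argue by induction on the number of $B$-circles, showing that each leaf of $G_B(L)$ corresponds to a $B$-circle whose local picture decomposes $S_L$ as a Murasugi sum of a smaller standard surface $S_{L''}$ and a Hopf band. The base case is the trivial link bounding a disk. Applying Theorem \ref{Gabai} inductively along a spanning tree traversal yields $S_L$ as an iterated Hopf plumbing, hence a fiber. The inductive step requires verifying that removing a leaf $B$-circle really yields a flat augmented link $L''$ with $G_B(L'')=G_B(L)$ minus that leaf, and that the corresponding Hopf band is honestly plumbed on; the local alternative pictures near a $B$-circle adjacent to a degree-one $C$-region should make this explicit.

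For the backward direction (not a tree implies not a fiber), I would apply Stallings' Theorem \ref{stallings} and show that the pushoff map $f_*\colon \pi_1(S_L)\to \pi_1(S^3-S_L)$ fails to be an isomorphism. Set up compatible cell structures so that $\pi_1(S^3-S_L)$ has generators associated to the type $C$ regions (carrying meridional loops dual to the black surface) and relators coming from the $B$-circles, while $\pi_1(S_L)$ is free with one basic loop per $B$-circle. In this dictionary, the pushoff of the loop around a $B$-circle is expressed as (a signed combination of) the generators of the two $C$-region vertices joined by the corresponding edge of $G_B(L)$. If $G_B(L)$ contains a cycle, summing the pushoffs around that cycle produces a nontrivial element of $\ker f_*$; if $G_B(L)$ is disconnected, the generator of $\pi_1(S^3-S_L)$ attached to a $C$-region in a second component lies outside the image of $f_*$. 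In either case $f_*$ is not an isomorphism, so $S_L$ is not a fiber.

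\textbf{Main obstacle.} The technical heart is Part 2: in the forward direction, establishing that a leaf $B$-circle genuinely yields a Hopf plumbing requires a careful local analysis of the standard surface near that circle and a check that the induced link $L''$ is again flat augmented with the expected $G_B$. In the backward direction, the combinatorial correspondence between the edges of $G_B(L)$ and the pushoff formula must be set up with consistent orientations so that the equivalence ``tree iff isomorphism'' reduces to the classical statement that the oriented incidence map of a graph is a bijection precisely when the graph is a tree. Once these dictionaries are in place, both directions follow from either Gabai's theorem or Stallings' theorem together with standard graph theory.
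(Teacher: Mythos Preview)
Your Part 1 matches the paper exactly: deplumb the Hopf bands at each $A$-circle, note $G_B(L)=G_B(L')$, and reduce to the link $L'$ with only $B$-circles.

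For the forward direction of Part 2 you take a genuinely different route. The paper does \emph{not} argue by Hopf plumbing at leaf $B$-circles; instead it applies Stallings' criterion directly, writing down the images $f_*(u_{b_i})=x_{c_m}x_{c_n}^{-1}$ and $f_*(u_{c_m})=x_{b_i}x_{c_m}^{-1}w_m$, and then walks the tree from the root $C_0$ outward (Steps 1--4) to exhibit preimages of all $x_c$'s, and from the leaves inward (Step 5) to exhibit preimages of all $x_b$'s. Surjectivity plus the Hopfian property gives the isomorphism. Your inductive plumbing approach is attractive, but the local claim that a leaf $B$-circle is a single Hopf band plumbed on is not as immediate as for $A$-circles (where the paper does use plumbing); indeed the author reverts to Stallings for $B$-circles again in the proof of Theorem \ref{thm_surgery}, which suggests the plumbing picture at $B$-circles is not the clean one. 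Your approach may well work, but the verification you flag as the ``main obstacle'' is real and is precisely what the paper sidesteps.

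For the backward direction there is a genuine bookkeeping error in your setup. Both $\pi_1(S_{L'})$ and $\pi_1(S^3-S_{L'})$ are free of the \emph{same} rank $q+r$: each has one generator for every bounded white region, i.e.\ one $u_{b_i}$ (resp.\ $x_{b_i}$) per $B$-region \emph{and} one $u_{c_j}$ (resp.\ $x_{c_j}$) per bounded $C$-region. There are no relators in $\pi_1(S^3-S_{L'})$, and $\pi_1(S_{L'})$ is not generated by the $B$-loops alone. With the correct generators, your incidence-matrix idea is exactly what the paper does at the level of $H_1$: the formula $\bar f_*(\bar u_{b_i})=\bar x_{c_m}-\bar x_{c_n}$ is the oriented incidence map of $G_B$, so a cycle in $G_B$ gives a nontrivial element of $\ker\bar f_*$. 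For the disconnected case, however, your non-surjectivity claim does not follow once the missing generators are restored (the $u_{c_j}$'s hit the $x_{b_i}$'s); the paper instead produces a kernel element by taking a loop $\beta$ in $S_{L'}$ encircling an entire component of $G_B$ not containing $C_0$ and observing that $f(\beta)$ is null-homotopic in $S^3-S_{L'}$.
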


The standard surface $S_L$ is obtained as the Murasugi sum of  a collection of Hopf bands together with the surface  $S_{L'}$. Each of these Hopf bands is a fiber for the complement of their boundary Hopf links.  The link  $L'$ is  the boundary  of $S_{L'}$. $L'$ is itself a flat augmented link in which all crossing circles are $B$-circles. The surface $S_{L'}$ is two-sided (i.e., orientable): from above the diagram  we see the  \textquotedblleft $-$\textquotedblright side of the surface on the black regions bounded by $B$-circles. The other black regions represent the \textquotedblleft $+$\textquotedblright side. We use this orientation throughout the paper.

  Observe that, by construction, $G_B(L)=G_B(L')$. In view of Theorem \ref{Gabai} we need to find conditions under which the surface $S_{L'}$ is a fiber for the link $L'$. 
This is given by the following.  

\begin{lem}\label{fib}
The oriented link $L'$ fibers with fiber $S_{L'}$ if and only  if its associated graph $G_B(L')$ is a tree.  
\end{lem}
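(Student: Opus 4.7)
I will prove the two directions separately, using the Murasugi-sum machinery for sufficiency and Stallings' criterion for necessity.

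\textbf{Sufficiency.} Assume $G_B(L')$ is a tree. I proceed by induction on the number of $B$-circles (edges of $G_B(L')$). For the base case of zero $B$-circles, $L'$ is planar and $S_{L'}$ is a disjoint union of disks, trivially a fiber for the unlink. For the inductive step, pick a leaf edge of $G_B(L')$, i.e., a $B$-circle $B$ whose incident leaf $C$-region is adjacent to no other $B$-circle. Because this leaf $C$-region is bounded only by $B$ together with arcs of planar components, the local Hopf band at $B$ sits inside $S_{L'}$ as a Murasugi summand: a $2$-sphere can be drawn through the leaf $C$-region and its partner so as to cut off a $3$-ball whose intersection with $S_{L'}$ is exactly the Hopf band at $B$. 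Deplumbing this Hopf band produces a new flat augmented link $L''$ with $G_B(L'')=G_B(L')$ minus the leaf, which is again a tree with one fewer edge. By induction $S_{L''}$ is a fiber, and since Hopf bands fiber Hopf links, Theorem~\ref{Gabai} implies $S_{L'}$ is a fiber for $L'$.

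\textbf{Necessity.} Assume $S_{L'}$ is a fiber; I verify $G_B(L')$ must be a tree by showing that otherwise Stallings' map $f_*$ (Theorem~\ref{stallings}) fails to be an isomorphism. Both $\pi_1(S_{L'})$ and $\pi_1(S^3\setminus S_{L'})$ admit combinatorial presentations from the diagram. For $S_{L'}$, collapse onto the spine whose vertices are the black regions and whose edges are the bands at the $B$-circles; this exhibits $\pi_1(S_{L'})$ as a free group with one generator per non-tree edge of that spine. For the complement of the two-sided surface $S_{L'}$ in $S^3$, take meridional loops $\mu_W$ through each white region $W$ as generators, subject to Wirtinger-style relations at the arcs of $L'$ that eliminate each $\mu_{B_i}$ in terms of $\mu_{C_j}$'s and impose a single global product-is-trivial relation on the $\mu_{C_j}$'s. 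A diagrammatic chase shows that $f_*$ records the edge--vertex incidence of $G_B(L')$: the pushoff of the loop associated to the $B$-circle $B_i$ is, up to sign, $\mu_{C_a}\mu_{C_b}^{-1}$, where $C_a,C_b$ are the two $C$-regions flanking $B_i$. Under this identification, $f_*$ is injective precisely when $G_B(L')$ is acyclic and surjective precisely when it is connected, so any cycle or disconnection of $G_B(L')$ obstructs the Stallings isomorphism and hence fibration.

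\textbf{Main obstacle.} The hardest step is the leaf-deplumbing in the sufficiency direction: one must exhibit, for any leaf of $G_B(L')$, an explicit Murasugi $2$-sphere separating the corresponding Hopf band from the rest of $S_{L'}$, and verify that the residual surface is indeed the standard Seifert surface of the flat augmented link $L''$ obtained by removing that $B$-circle. On the necessity side, the main subtlety is carefully running the Wirtinger-style analysis on the two-sided surface complement so as to justify the claimed identification $f_*(\alpha_i)=\mu_{C_a}\mu_{C_b}^{-1}$ in the presence of the four crossings of each $B$-circle with its encircled strands.
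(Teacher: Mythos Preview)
Your sufficiency argument has a genuine gap. A rank count already rules out deplumbing a single Hopf band at a leaf $B$-circle: $\pi_1(S_{L'})$ is free of rank $q+r$ (one generator per bounded white region), and removing a leaf $B$-circle deletes the $B_i$-region while merging the leaf $C$-region into its neighbour, so the rank drops by $2$; since first Betti numbers add under Murasugi sum, a rank-$1$ Hopf band cannot be the summand. More structurally, the Hopf-band deplumbing that works for $A$-circles relies on the fact that at an $A$-circle the region between the two encircled strands is \emph{black}, hence lies in $S_{L'}$ and serves as the Murasugi $2n$-gon. At a $B$-circle that middle region is by definition \emph{white}, so it is not part of $S_{L'}$ and no such summing disk is available. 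This is exactly why the paper abandons Murasugi sum once the $A$-circles are gone and proves sufficiency directly via Stallings' criterion: with explicit free generators $u_{b_i},u_{c_j}$ for $\pi_1(S_{L'})$ and $x_{b_i},x_{c_j}$ for $\pi_1(S^3\setminus S_{L'})$, one computes (up to conjugation) $u_{b_i}\mapsto x_{c_m}x_{c_n}^{-1}$ and $u_{c_m}\mapsto x_{b_i}x_{c_m}^{-1}w_m$, then walks the tree outward from the root $C_0$ to hit every $x_{c_j}$ and inward from the terminal vertices to hit every $x_{b_i}$, concluding by the Hopfian property of free groups.

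Your necessity sketch is closer to the paper's, but two points need correction. First, $\pi_1(S^3\setminus S_{L'})$ is genuinely \emph{free} on the loops through the bounded white regions; there are no Wirtinger-type relations here (you seem to be conflating the surface complement with the link complement), and in particular the $x_{b_i}$'s are not eliminated---they are honest generators and must be hit when checking surjectivity, which your computation of $f_*$ on the $u_{b_i}$ alone cannot detect. Second, you assert ``surjective precisely when connected'' without argument; the paper instead establishes non-injectivity on $H_1$ in both non-tree cases: a component of $G_B(L')$ not containing $C_0$ yields a surface loop whose pushoff is null-homologous, and a cycle with edges $B_1,\ldots,B_n$ yields $\bar f_*(\bar u_{b_1}+\cdots+\bar u_{b_n})=0$ by telescoping.
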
 
This lemma concludes the proof of Theorem {\ref{main}}.

\begin{figure}[h]
\begin{center}
\includegraphics[scale=.10]{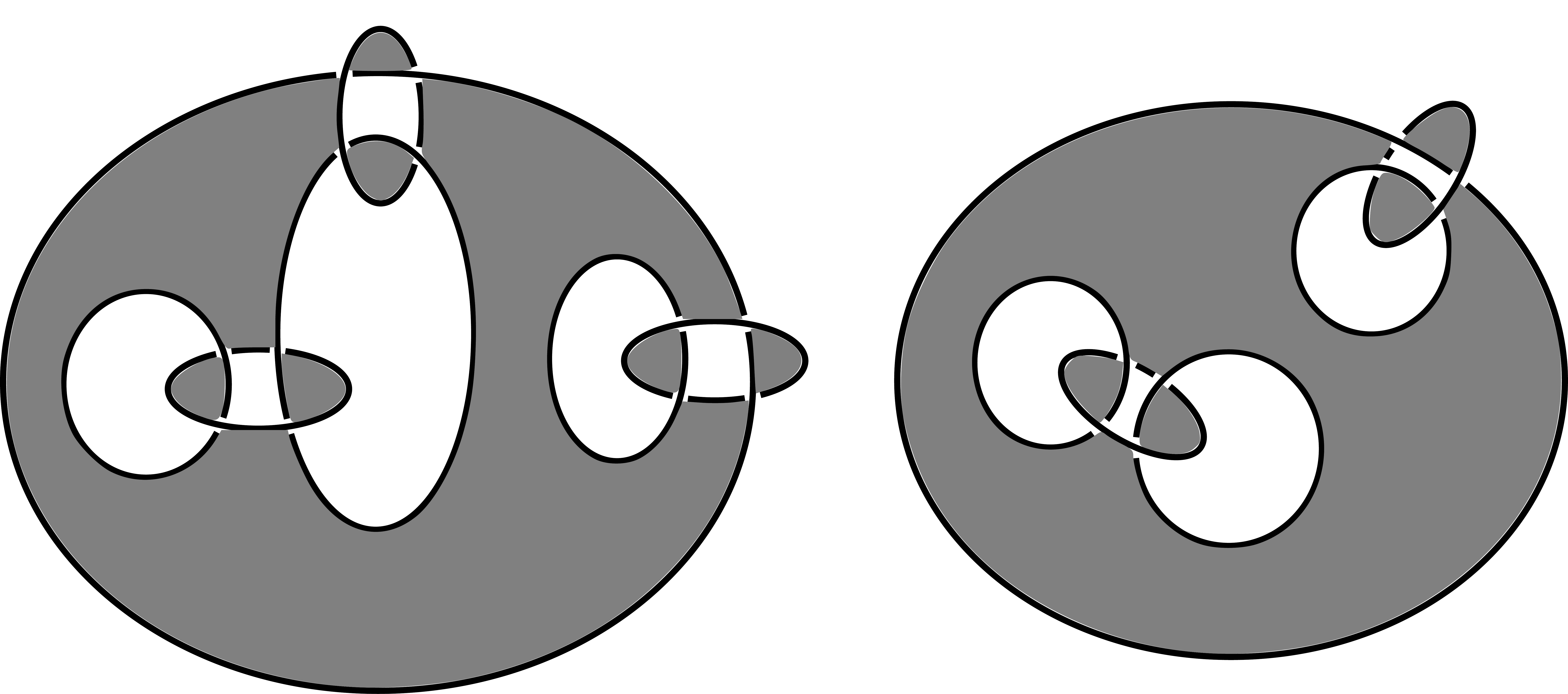}
\caption{Surfaces $S_{L'}$ obtained from the standard surfaces in figure \ref{Newlink}}.
\label{S_L'}
\end{center}
\end{figure}

We now proceed to prove the lemma.

\begin{proof}
First observe that the link $L'$ is itself a flat augmented link, obtained from $L$ by decomposing a pair of Hopf bands for each  $A$-circle. The diagram of $L'$ divides the plane into type $B$ and type $C$ regions. 

Observe also that the fundamental group of the surface $S_{L'}$ is free.   Let $B_1,...,B_q$ and $C_1,...,C_r$ be the type $B$ and $C$ regions given by the diagram of $L'$. A generating set for $\pi_1(S_{L'})$ is given as follows. Consider simple closed curves  on $S_{L'}$ around these regions. They are labeled $\alpha_{b_1},...\alpha_{b_q}, \alpha_{c_1},...,\alpha_{c_r}$ and are oriented in the  counter-clockwise direction. Now choose a base point $a\in S_{L'}$   in such a way that, when seen from above the projecting plane, it lies in the \textquotedblleft $+$\textquotedblright side of $S_{L'}$. Finally, add  arcs $h_{b_1},...,h_{b_q}, h_{c_1},...,h_{c_r}$  from $a$ to each of the curves above. This gives loops $\beta_{b_i}=h_{b_i}\alpha_{b_i}h_{b_i}^{-1}$, $\beta_{c_j}=h_{c_j}\alpha_{c_j}h_{c_j}^{-1}$ based at $a$.  This set of based loops corresponds to a generating set  for $\pi_1(S_{L'})$. These generators will  be denoted by $u_{b_1},...,u_{b_q}$, $u_{c_1},..., u_{c_r}$.

The fundamental group  of the complement $S^3-S_{L'}$  is also free. We now describe a generating set for this group. As before, let $C_0$ denote the unbounded white region determined by the diagram of $L'$. Let $B_i$ denote a white region determined by a $B$ circle and let $\gamma_{b_i}\subset S^3-S_{L'}$ be a semicircle   with one endpoint in $C_0$ and the other in $B_i$,  lying under the projecting plane. Associated to each $B$ region  we construct a simple closed curve by connecting the endpoints of the arc $\gamma_{b_i}$ to the point $f(a)$ by straight line segments. Here, $a$ is the base point for $\pi_1(S_{L'})$ and $f:S_{L'}\longrightarrow S^3-S_{L'}$ is described in theorem \ref{stallings}. Each of these curves is oriented so that, starting at $f(a)$, we move along the line segment connecting $f(a)$ to the endpoint of $\gamma_{b_i}$ in $B_i$, then move along $\gamma_{b_i}$ to the second endpoint and then back to $f(a)$ through the second line segment. Make a similar construction for each type $C$ region.  We have built  loops with base point $f(a)$ corresponding to a set of generators for $\pi_1(S^3-S_{L'})$.   These generators are denoted   by  $x_{b_1},...,x_{b_q}, x_{c_1},..., x_{c_r}$, according to the type of region they cross.

We can now  describe the induced map $f_*$ in terms of these generators. If $\beta$ is a loop in $S_{L'}$  based at $a$ then $f(\beta)$ is a loop in $S^3-S_{L'}$ based at $f(a)$. If $u\in\pi_1(S_{L'})$ represents the homotopy class of $\beta$, then $f_*(u)$ is an element in $\pi_1(S^3-S_{L'})$ given  as a word in the letters  $x_{b_1},...,x_{b_q}, x_{c_1},..., x_{c_r}$ as follows: start at $f(a)$ and move along $f(\beta)$. If it crosses a white region other than $C_0$ from above to below the projecting plane, we write the corresponding letter $x\in\{x_{b_1},...,x_{b_q}, x_{c_1},..., x_{c_r}\}$. If it crosses the white region from below to above the projecting plane, we write the letter $x^{-1}$. If it crosses $C_0$ we do not write any letters. Going around $f(\beta)$ once gives the desired word.

Now let $\beta_{b_1},...,\beta_{b_q}, \beta_{c_1},...,\beta_{c_r}$ be the based loops corresponding to the generators of $\pi_1(S_{L'})$ as above. They were given by  $\beta_{b_i}=h_{b_i}\alpha_{b_i}h_{b_i}^{-1}$, $\beta_{c_j}=h_{c_j}\alpha_{c_j}h_{c_j}^{-1}$.
 
Let $B_i, C_n, C_m$ be  white regions, $C_n, C_m$ adjacent to the $B$-circle $B_i$, as in figure \ref{Bcircle}. The loops in figure \ref{Bcircle} (left) represent  the simple closed curves  in $S_{L'}$ around these regions. There, we indicate the corresponding generators of $\pi_1(S_{L'})$.  Their images under $f_*$ are given by   
$$\left\{\begin{array}{lr}
u_{b_i}\mapsto  g_{b_i}x_{c_m}x_{c_n}^{-1}g_{b_i}^{-1}\\
u_{c_m}\mapsto g_{c_m}x_{b_i}x_{c_m}^{-1}w_mg_{c_m}^{-1}, \hspace{.2cm}\text{$w_m$ is a word without the letter $x_{b_i}$};\\
u_{c_n}\mapsto g_{c_n}x_{c_n}x_{b_i}^{-1}w_ng_{c_n}^{-1}, \hspace{.2cm}\text{$w_n$ is a word without the letter $x_{b_i}$}. 
\end{array}\right\}
$$

Here  $g_{b_i}, g_{c_j}$ are the words we get in the letters $\{x_{b_1},...,x_{b_q}, x_{c_1},..., x_{c_r}\}$ by moving along the arcs $h_{b_i}, h_{c_j}$ respectively. Thus, up to conjugation, we get 
\begin{equation}
\left\{\begin{array}{lr}
u_{b_i}\mapsto x_{c_m}x_{c_n}^{-1}\\
u_{c_m}\mapsto x_{b_i}x_{c_m}^{-1}w_m, \hspace{.2cm} \text{$w_m$ is a word without the letter $x_{b_i}$};\\
u_{c_n}\mapsto x_{c_n}x_{b_i}^{-1}w_n, \hspace{.2cm}\text{$w_n$ is a word without the letter $x_{b_i}$}. 
\end{array}\right\}
\label{f_*}
\end{equation}

\begin{remark}
\begin{itemize}\item[]	
 \item[(a)] If a $B$-circle is adjacent to the unbounded region $C_0$ and another region $C_m$, then, up to conjugation, we have $u_{b_i}\mapsto x_{c_m}^{\pm 1}$.
\item[(b)] We may assume the words $w_m, w_n$ do not have the letter $x_{b_i}$. If they did, this would imply the simple closed curves $\alpha_{c_m},\alpha_{c_n}$ around $C_m$ and $C_n$ are the same curve, which would mean $C_m$ and $C_n$ represent the same region. In this case the circle $B_i$ would not be linked to the other components of  $L'$, i.e., $L'$  would be a split link, which is not fibered.
\end{itemize}
\end{remark}

\begin{figure}[h]
\begin{center}
\includegraphics[scale=.17]{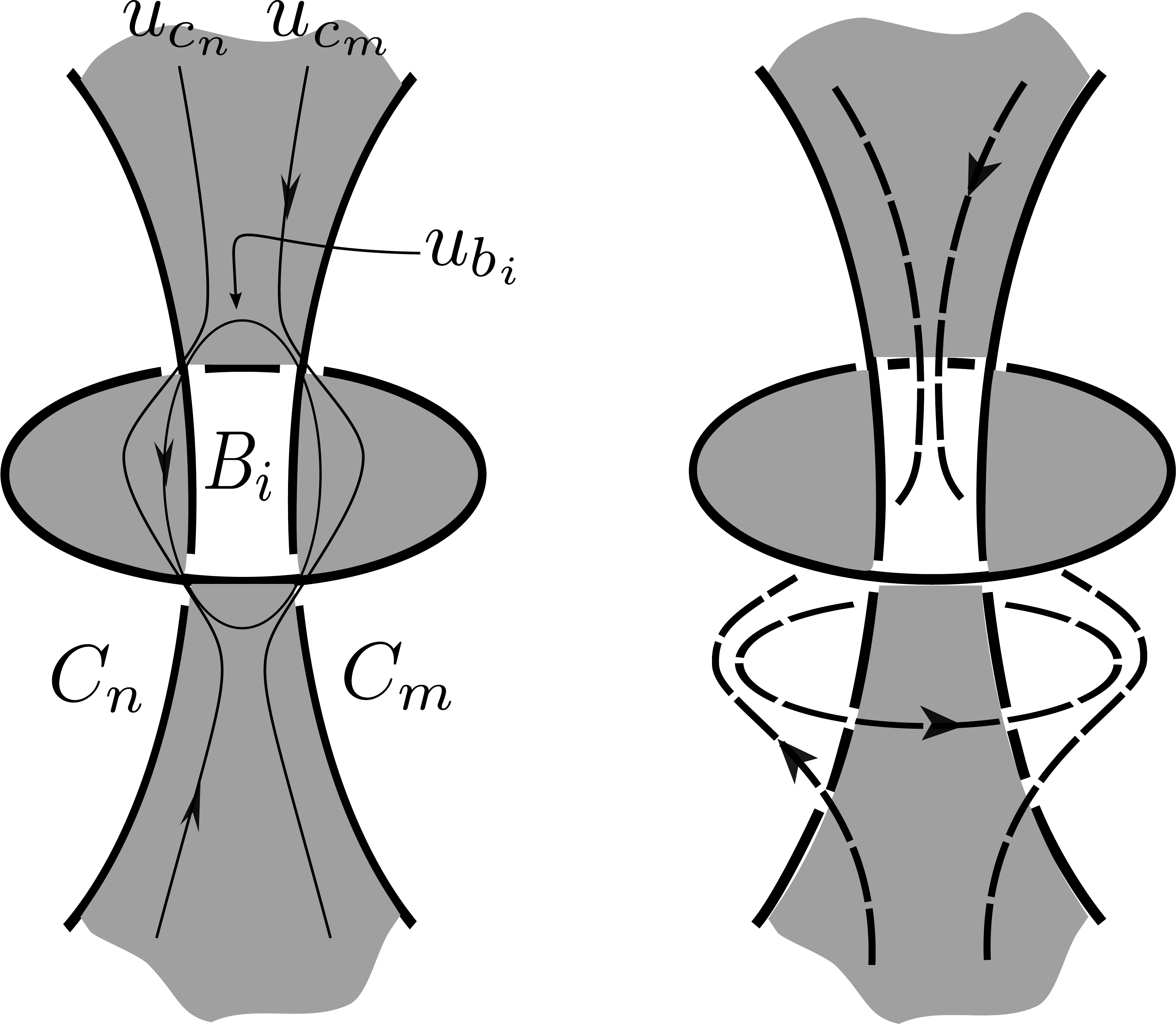}
\caption{Left: Loops around white regions corresponding to generators $u_{b_i},u_{c_m}$ and $u_{c_n}$; Right: Their images under $f$.}
\label{Bcircle}
\end{center}
\end{figure}



The strategy now is as follows: by theorem \ref{stallings}  $S_{L'}$ is a fiber if and only if the map $f_*:\pi_1(S_{L'})\longrightarrow\pi_1(S^3-S_{L'})$ is an isomorphism.  
We show that if $G_B(L')$ is not a  tree  then the map induced by  $f_*$ on homology is not an isomorphism. Therefore $f_*$ cannot be isomorphism.  When $G_B(L')$ is a  tree we prove that  $f_*$ is surjective. Since free groups have the Hopfian property, it follows that $f_*$ is an isomorphism.

First assume $G_B(L')$ is not a tree

Consider the map $\bar{f}_*:H_1(S_{L'})\longrightarrow H_1(S^3-S_{L'})$ induced on homology by $f_*$. Denote by $\bar{u}_{b_1},...,\bar{u}_{b_q}$, $\bar{u}_{c_1},..., \bar{u}_{c_r}$ the generators of $H_1(S_{L'})$, corresponding to the above generators of $\pi_1(S_{L'})$. The generators of $H_1(S^3-S_{L'})$ are constructed similarly and denoted $\bar{x}_{b_1},...,\bar{x}_{b_q}$, $\bar{x}_{c_1},..., \bar{x}_{c_r}$.

\textbf{Case 1.} \underline{\textit{$G_B(L')$ has two or more connected components}}:

Let $C_1,...,C_{n+1}$ represent the vertices of a component not containing the vertex $C_0$.
Let $\beta\subset S_{L'}$ be a simple closed curve around the regions $C_1,...,C_{n+1}$. This loop is obviously non-trivial in $S_{L'}$.  Moreover, the corresponding element $\bar{u}\in H_1(S_{L'})$ is also non-trivial. However we see that $f(\beta)$ is a trivial loop in $S^3-S_{L'}$ and therefore $\bar{f}_*(\bar{u})\in H_1(S^3-S_{L'})$ is trivial.  

\textbf{Case 2.} \underline{\textit{$G_B(L')$ contains a non-trivial loop}}: Let $C_1,...,C_n,C_{n+1}=C_1$ represent the vertices and $B_1,...,B_n$ the edges of this loop. In (\ref{f_*}) above we have expressions for $f_*$ up to  conjugation. Therefore, at the level of homology, we have  

$$\bar{f}_*(\bar{u}_{b_i})= -\bar{x}_{c_i}+\bar{x}_{c_{i+1}}$$ 
and thus 
$$\bar{f}_*(\bar{u}_{b_1}+\cdots+ \bar{u}_{b_n})=(-\bar{x}_{c_1}+\bar{x}_{c_2})+\cdots +(-\bar{x}_{c_n}+\bar{x}_{c_1})=0$$
The same argument holds if the loop contains the vertex corresponding to $C_0$.

Suppose now  $G_B(L')$ is a  tree. In this case note that there is only one \textquotedblleft +\textquotedblright region seen from above the projecting plane. We  take the base point $a\in S_{L'}$  lying in this region. This choice of base point allows us  to choose the arcs $h_{b_i},h_{c_j}$ in $S_{L'}$  joining $a$ to the simple closed curves around the white regions in such a way that all these arcs lie in the \textquotedblleft +\textquotedblright region of $S_{L'}$. In this situation, the expressions in (\ref{f_*}) give the actual images under  $f_*$  of the generators of $\pi_1(S_{L'})$.  We  now show $f_*$ is surjective. 

\begin{step}\label{step1}
\end{step} Let $C_0$ be the initial vertex and let $C_1,...,C_k$ be the vertices connected to $C_0$ by edges represented by the $B$-circles $B_1,...,B_k$. We have  
$$u_{b_i}\mapsto x_{c_i}^{\pm 1}$$
Thus $x_{c_i}$ is  the image of either $u_{b_i}$ or $u_{b_i}^{- 1}$.

\begin{step}\label{step2}
\end{step} Let $C_{11},...,C_{1k_1}$ be connected to $C_1$ by edges $B_{11},...,B_{1k_1}$. Note that, since $G_B(L')$ is a tree, $C_{1i}\neq C_j$ for $i=1,...,k_1$ and $j=0,...,k$. We thus have 
$$u_{b_{1j}}\mapsto x_{c_1}^{\pm 1}x_{c_{1j}}^{\mp 1}$$
And hence 
$$u_{b_1}^{\mp 1}u_{b_{1j}}\mapsto x_{c_{1j}}^{\mp 1}$$
This determines  preimages  for $x_{c_{1i}}$ in terms of $u_{b_{1i}}$ and $u_{b_1}$.

\begin{step}\end{step}
 Repeat Step 2 for $C_2,...,C_k$. These $C_i$'s are the ones defined in Step \ref{step1}. This determines preimages  for $x_{c_{ji}}$ in terms of $u_{b_{ji}}$ and $u_{b_j}$.

\begin{step}
\end{step} Inductively find preimages  for all the $x_c's$ in terms of the $u_b's$. Note that, for this step to work, it is necessary that 
$G_B(L')$ is a tree. 

Now we need to find preimages for the $x_b$'s.

\begin{step}\end{step} From 
$$u_{c_m}\mapsto x_{c_m}^{-1}x_{b_i}w_m, \text{$w_m$ word without the $x_{b_i}$ letter}$$
$$u_{c_n}\mapsto x_{c_n}x_{b_i}^{-1}w_n, \text{ $w_n$ word without the $x_{b_i}$ letter}$$
we show how to obtain the $x_b$'s as images of $u_b$'s and $u_c$'s.

Since $G_B(L')$ is a tree, each $C$-region corresponding to a terminal vertex is adjacent to a single $B$-circle. Suppose $C_m$ is such a region.   In this case  we have  $u_{c_m}\mapsto x_{c_m}^{-1}x_{b_i}$ and thus we  obtain $x_{b_i}$ as the image of a word in the letters $u_b's$ and $u_c's$ (and their inverses). Repeat this process for all the terminal vertices. Inductively we can  find all the $x_b$'s as the image of a word in the letters $u_b$'s and $u_c$'s.  
\end{proof}


\section{Dehn filling on crossing circles}\label{section_surgery}

In this section we prove

\begin{thm_surgery}
Let $L$ be an flat augmented link such that the graph $G_B(L)$ is a tree. Then $\pm 1$ Dehn filling on crossing circle components yields a fibered link $K$.
\end{thm_surgery}

\begin{figure}[h]
\begin{center}
\includegraphics[scale=.1]{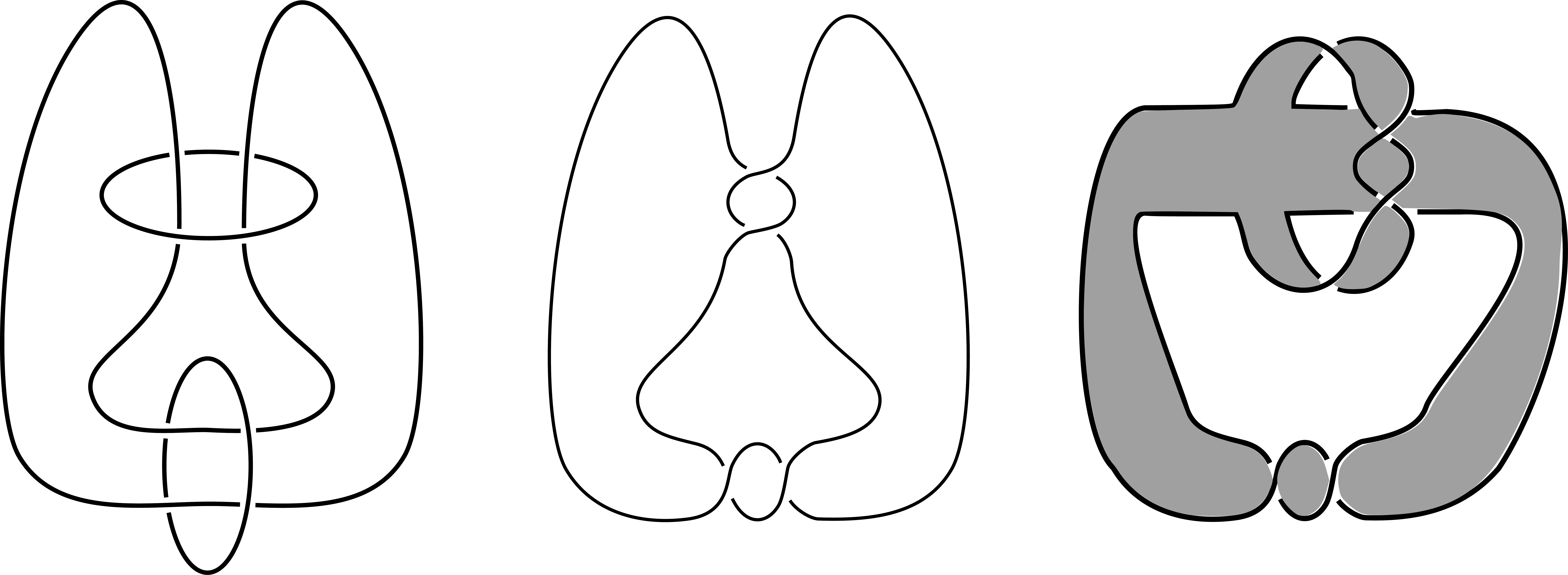}
\caption{Left: Original link; Center: $\pm1$ filling on crossing circles; Right: Standard Seifert surface for resulting link.}
\label{surgery}
\end{center}
\end{figure}

\begin{proof}

 $\pm 1$ filling on a $A$-circles yields a fibered link. To see this consider the link $L'$ constructed as in the last section. Recall that when $L$ fibers with fiber $S_L$, then $L'$ fibers with fiber $S_{L'}$. $A$-circles in $L$ correspond to the Murasugi sum of two Hopf bands with $L'$. It should be easy to see that $\pm 1$ filling on a $A$-circle corresponds to the sum  of a single Hopf band (see figure \ref{surgery}). Therefore, by theorem \ref{Gabai}, the statement is true for filling on  $A$-circles. 

 The proof of  the statement for $B$-circles is similar to the proof that $S_{L'}$ is a fiber for $L'$. Let $K'$ be the link obtained from $L'$ by performing $\pm 1$ Dehn filling on the $B$-circles. We consider the standard Seifert surface $S_{K'}$ obtained from the Seifert algorithm applied to the link $K'$ (see Figure \ref{Bsurgery}).  This surface is the surface obtained by checkerboard coloring the regions determined by the diagram of $K'$ in the projecting plane (this is a 4-valent diagram). We now describe the map $f_*:\pi_1(S_{K'})\longrightarrow \pi_1(S^3-S_{K'})$. First we introduce some conventions.  We may consider the simple closed curves around white regions of $S_{K'}$,  oriented counter-clockwise. As before, since $G_B(L)$ is a tree, we only see a single \textquotedblleft +\textquotedblright region on $S_{K'}$.  Take  a base point $a\in S_{K}'$ in this region and add an arc from this base point to each of the simple closed curves. We have built a set of based loops corresponding to generators for $\pi_1(S_{K'})$. Denote this set by $\{u_c\}$.  Similar to the construction in section \ref{section_proof},  we have a corresponding set of generators $\{x_c\}$ for $\pi_1(S^3-S_{K'})$. We see that we may associate to $S_{K'}$ a tree $G_B(K')$ identical to $G_B(L')$. Vertices correspond to white regions and two vertices are joined by an edge if they are adjacent to a pair of crossings obtained by filling on a $B$-circle (see Figure \ref{Bsurgery}). The \textit{node} of the tree is the vertex corresponding to the unbounded region $C_0$. We say this vertex is at \textit{level $0$}. Let $C_{11},...C_{1s}$ be the vertices adjacent to $C_0$. We say these edges are at \textit{level $1$}. Recursively  define vertices at higher levels. $f_*$ is described as follows:

\begin{figure}[h]
\begin{center}
\includegraphics[scale=.10]{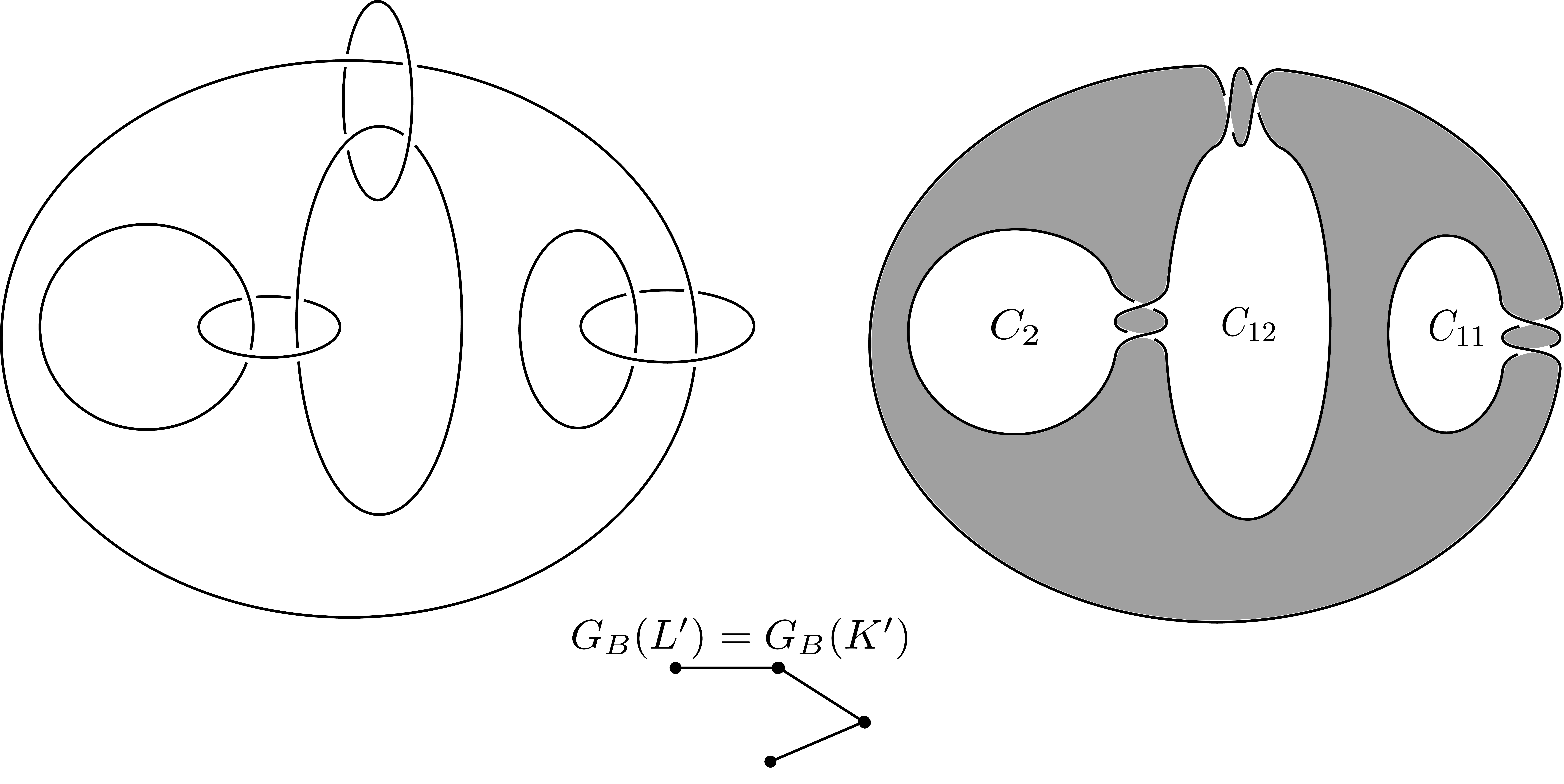}
\caption{Left: link $L'$; Right: $K'$ and its standard Seifert surface.}
\label{Bsurgery}
\end{center}
\end{figure}

\begin{itemize}
\item [1.] For a terminal vertex, say $C_m$, on $G_B(K')$ we have: $u_{c_m}\mapsto w=x_{c_m}x_{c_{m-1}}^{-1}$ or $u_{c_m}\mapsto w'=x_{c_{m-1}}x_{c_m}^{-1}$, where $C_{m-1}$ is the vertex one level lower than $C_m$ and adjacent to it. 
\item [2.] Let $C_1$ be a vertex at level 1 (adjacent to $C_0$). Suppose $C_1$ is also a terminal vertex. We have $u_{c_1}\mapsto x_{c_1}^{\pm 1}$.  
\item [2'.] If $C_1$ is not a terminal vertex, let $C_{21},\cdots,C_{2s}$ be the vertices at level 2 adjacent to it.  We have $u_{c_{1}}\mapsto x_{c_{1}}^{\pm 1}w_1w_2\cdots w_s$, where the $w_i$ are of one the forms $w_i=x_{c_{2i}}x_{c_{1}}^{-1}$ or $w_i=x_{c_{1}}x_{c_{2i}}^{-1}$.
\item [3.] An intermediate vertex (not a terminal vertex nor adjacent to $C_0$) $C_{k-1}$, say at level $k-1$, is adjacent to  vertices $C_{k1},\cdots, C_{ks}$ at level $k$ and to a single vertex   $C_{k-2}$ at level $k-2$.  We have $u_{c_{k-1}}\mapsto (x_{c_{k-1}}x_{c_{k-2}}^{-1})^{\pm 1}w_1w_2\cdots w_s$, where the $w_i$ are of one the forms $w_i=x_{c_{k-1}}x_{c_{ki}}^{-1}$ or $w_i=x_{c_{ki}}x_{c_{k-1}}^{-1}$.
\end{itemize}

 The set of generators for the free group $\pi_1(S_{K'})$ is $\{u_{c_{ij}}\}$ and    the set of generators for  $\pi_1(S^3-S_{K'})$ is $\{x_{c_{ij}}\}$. Note that $f_*$ maps the set $\{u_{c_{ij}}\}_c$  to $\{y_{ij}\}$, where $y_{ij}$ is a word on the letters $x_{c_{ij}}$. Given the tree structure on $G_B(K')$ and the description of $f_*$ above, it is not very hard to see  how to obtain $\{y_{ij}\}$ from $\{x_{c_{ij}}\}_c$ by a sequence of Nielsen transformations (see example below):  start at a terminal vertex $C_m$ and replace $x_{c_m}$ by $y_m$, where $y_m=x_{c_m}x_{c_{m-1}}^{-1}$ or  $y_m=x_{c_{m-1}}x_{c_m}^{-1}$. Repeat this for all terminal vertices. Using  the description of $f_*$ as above and an inductive argument (part 3 in the description of $f_*$ above)  we see how to obtain all the $y_{ij}$ by a sequence of Nielsen transformations.   Therefore the map $f_*:\pi_1(S_{K'})\longrightarrow \pi_1(S^3-S_{K'})$ is an isomorphism. 
\end{proof}

\begin{example} For the example in Figure \ref{Bsurgery} we have: 
$$u_{c_{11}}\mapsto x_{c_{11}}, 
u_{c_{12}}\mapsto x_{c_{12}}x_{c_2}x_{c_{12}}^{-1},  
u_{c_2}\mapsto x_{c_{12}}x_{c_2}^{-1} $$
and the sequence of Nielsen transformations is given by

$\begin{cases}
x_{c_{11}}  \mapsto x_{c_{11}} & \mapsto x_{c_{11}} \\
x_{c_{12}}  \mapsto x_{c_{12}} & \mapsto x_{c_{12}}(x_{c_{12}}x_{c_2}^{-1})^{-1} \\
x_{c_2}  \mapsto x_{c_{12}}x_{c_2}^{-1} & \mapsto x_{c_{12}}x_{c_2}^{-1} \\
\end{cases}$
 
\end{example}

\section{Locally alternating augmented links}\label{section_alt}

Given a flat augmented link $L$, we construct a corresponding \textit{locally alternating augmented link}  $L_a$ as described in section \ref{section_thms} (see Figure \ref{L_a}).  Regarding the fibration of the oriented link $L_a$, we get a stronger statement. 

\begin{thm_alt}
Let $L_a$ be a locally alternating augmented link obtained from a flat augmented link $L$ with a connected diagram.  Then $L_a$ fibers. 
\end{thm_alt}

This theorem follows immediately from 
\begin{lem}
Let $L_a$ be a locally alternating augmented link obtained from a flat augmented link $L$ with a connected diagram. Then $L_a$ is obtained from $\pm 1$ Dehn filling  on the crossing circles of a flat augmented link $\tilde{L}$ such that $G_B(\tilde{L})$ is a tree.
\end{lem}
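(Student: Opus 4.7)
The plan is to construct $\tilde L$ explicitly from $L$ by adding auxiliary crossing circles whose $\pm 1$ Dehn filling realizes the local alternating modifications that turn $L$ into $L_a$. The construction proceeds in three steps: a local model at each crossing circle of $L$, a global assembly, and a verification that $G_B(\tilde L)$ is a tree.

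Locally, at each crossing circle $C$ of $L$, the passage from $L$ to $L_a$ alters two of the four strand--circle crossings around $C$ so that the pattern becomes alternating. Each such altered crossing $X$ will be realized in $\tilde L$ by introducing an auxiliary crossing circle $C'_X$ placed adjacent to $X$, encircling a pair of parallel strand segments just outside $X$, chosen so that $C'_X$ encloses no crossings in $\tilde L$ (so $\tilde L$ stays flat). A $\pm 1$ Dehn filling on $C'_X$, with sign opposite to that of $X$, introduces a full twist whose two new crossings combine with $X$ via a Reidemeister II move, producing a single crossing of opposite sign and thereby implementing the required crossing change.

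Globally, I define $\tilde L$ to retain the main strands of $L$, all of the original crossing circles of $L$, and to include two auxiliary crossing circles per original crossing circle of $L$ (one for each of the two altered crossings). By construction $\tilde L$ is a flat augmented link, and $\pm 1$ Dehn filling on its auxiliary crossing circles (with signs determined above) recovers $L_a$, since the local Reidemeister moves at each crossing circle of $L$ reproduce exactly the alternating pattern prescribed by the definition of $L_a$.

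The main obstacle is showing that $G_B(\tilde L)$ is a tree. The auxiliary crossing circles will be positioned so that they are $A$-circles in $\tilde L$ (each bounds two adjacent white regions in the diagram of $\tilde L$, separated by the new bigon created by the auxiliary), and hence contribute no edges to $G_B(\tilde L)$. The combinatorial task is then to analyze the $B$-circles and type-$C$ regions of $\tilde L$ and show they form a tree: connectedness of the diagram of $L$ is used crucially to guarantee that $G_B(\tilde L)$ is connected, while the localized effect of each auxiliary $A$-circle on the region structure around the corresponding crossing circle of $L$ prevents any new cycles from being created. Once $G_B(\tilde L)$ is shown to be a tree, the lemma follows and Theorem~\ref{thm_alt} is an immediate consequence of Theorem~\ref{thm_surgery}.
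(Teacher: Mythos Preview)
Your construction of $\tilde L$ is plausible at the local level, but the heart of the lemma is the assertion that $G_B(\tilde L)$ is a tree, and this is precisely where your argument is incomplete. You claim that the auxiliary circles can be placed as $A$-circles, hence contribute no edges to $G_B(\tilde L)$, and that ``connectedness of the diagram of $L$'' makes $G_B(\tilde L)$ connected while the local modifications ``prevent any new cycles.'' Neither of these statements does the required work. Connectedness of the diagram of $L$ does \emph{not} imply connectedness of $G_B(\tilde L)$: for instance, if every crossing circle of $L$ happens to be an $A$-circle, then $G_B(L)$ has no edges at all and is disconnected whenever there is more than one $C$-region; your auxiliary circles, being $A$-circles by hypothesis, add no edges and leave $G_B(\tilde L)$ disconnected as well. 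Likewise, saying that no \emph{new} cycles are created does nothing about cycles already present among the $B$-circles of $L$, which persist unchanged in $\tilde L$ under your construction.

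The paper's proof is organized around exactly these two obstructions. Rather than adding a uniform pair of auxiliary circles at every crossing circle, it works directly with $G_B(L_a)$ and modifies it iteratively: each nontrivial cycle is broken by realizing one of its $B$-circle edges as a $\pm 1$ filling on an auxiliary $A$-circle (which subdivides the edge and opens the loop), and each pair of components is joined by realizing an $A$-circle between them as a $\pm 1$ filling on an auxiliary $B$-circle (which adds a connecting edge). Only after the graph has been forced into a tree are the remaining alternating crossing circles traded for pairs of flat ones. The key point you are missing is that one must sometimes \emph{introduce} $B$-circles, not just $A$-circles, to achieve connectedness, and one must place the auxiliary circles in positions dictated by the global graph structure, not merely by the local picture at each crossing.
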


\begin{proof}
Just as for $L$, construct the standard Seifert surface for $L_a$, as well as the corresponding graph $G_B(L_a)$. Given $L_a$, suppose  $G_B(L_a)$ is not a tree. 

\begin{stepp}
Eliminate  nontrivial loops in $G_B(L_a)$ :
\end{stepp}  

Choose a loop $\gamma$ in $G_B(L_a)$ and an edge $e$ on this loop (a $B$-circle on $L_a$) connecting vertices $v_1, v_2$ corresponding to regions $C_1, C_2$. Figure \ref{remove_edge}  shows how  to obtain $L_a$ from $\pm 1$ filling on a $A$-circle of a new link $L_{a1}$.  The relationship between $G_B(L_a)$ and $G_B(L_{a_1})$ is  that $G_B(L_{a1})$ is obtained from $G_B(L_a)$ by adding a new vertex $v_3$ in the interior of $e$ and breaking the loop $\gamma$ at $v_3$.    

\begin{figure}[h]
\begin{center}
\includegraphics[scale=.11]{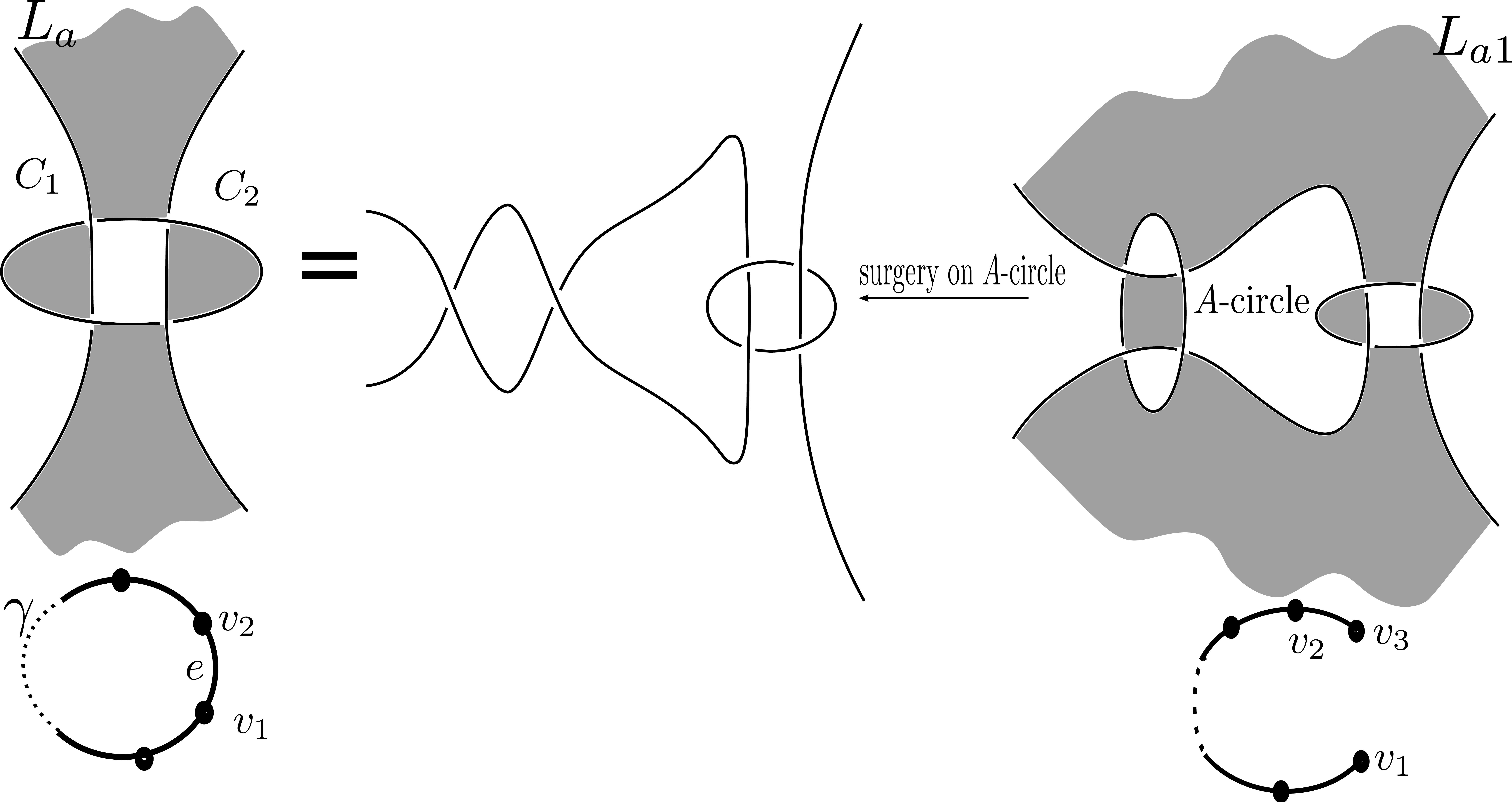}
\caption{Removing the loop $\gamma$ from $G_B(L_a)$.}
\label{remove_edge}
\end{center}
\end{figure}

\begin{stepp}
Join disconnected components of $G_B(L_a)$:
\end{stepp}  

Consider two vertices $v_1, v_2$ on $G_B(L_a)$ not in the same component but such that their corresponding $C$-regions $C_1,C_2$ are adjacent to a  $A$-circle. Figure \ref{join_vertices} shows how to obtain $L_a$ from $\pm 1$ filling on a $B$-circle of a new  link $L_{a2}$. The relationship here is  that $G_B(L_{a2})$ is obtained from $G_B(L_a)$ by joining $v_1,v_2$ by a new edge $e$. 

\begin{figure}[h]
\begin{center}
\includegraphics[scale=.15]{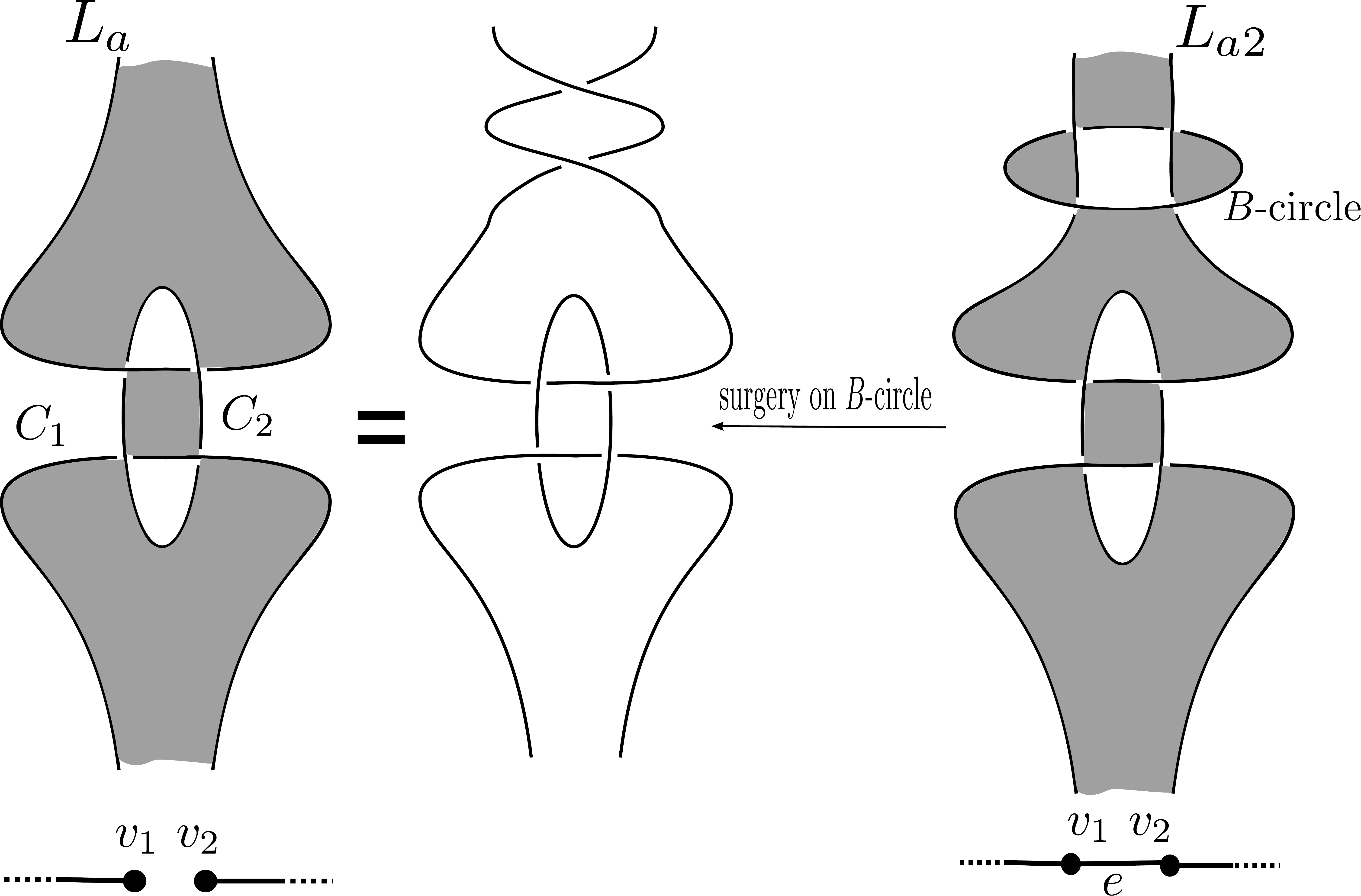}
\caption{Joining vertices $v_1$ and $v_2$.}
\label{join_vertices}
\end{center}
\end{figure}

\begin{stepp}
\end{stepp}
Repeat the steps above, as needed, until we obtain a link $\tilde{L}_0$ such that $G_B(\tilde{L}_0)$ is a tree.  Note  that some of its crossing circles are alternating and some are not.  $L_a$ is obtained from $\tilde{L}_0$ by a sequence of $\pm 1$ fillings on its flat crossing circles (these ones are the ones introduced in the steps above).

The conclusion now follows from the observation that   alternating crossing circles can be  obtained from  $\pm 1$ filling on a pair of flat crossing circles, as described in Figure \ref{surgery2}. The desired link $\tilde{L}$ is the link obtained from $\tilde{L}_0$ by replacing alternating crossing circles by a pair of non-alternating ones. Note that since $G_B(\tilde{L}_0)$ is a tree, $G_B(\tilde{L})$ is also a tree. 
\end{proof}

\begin{figure}[h]
\begin{center}
\includegraphics[scale=.17]{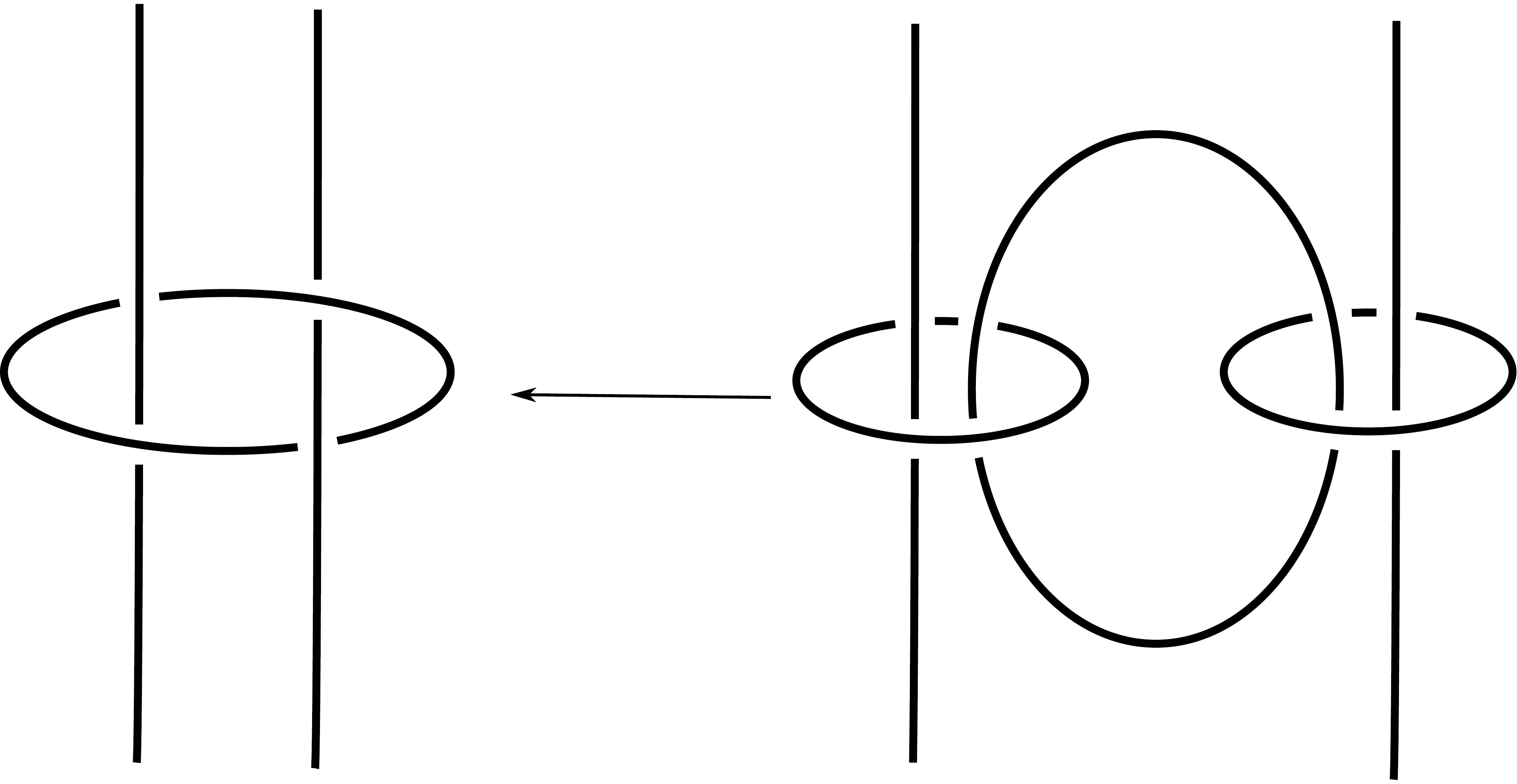}
\caption{Obtaining alternating crossing circles from fillings on non-alternating ones.}
\label{surgery2}
\end{center}
\end{figure}




\vspace{.4cm}
\noindent
\address{\textsc{Department of Mathematics,\\
 Universidade Federal do Cear\'a}}\\
\email{\textit{E-mail:} \texttt{dgirao@mat.ufc.br}}
\end{document}